\documentclass[11pt, reqno ]{amsart}
\usepackage{geometry}                
\usepackage{graphicx}
\usepackage{amssymb}
\usepackage{epstopdf}
\usepackage{amsmath}
\usepackage{mathrsfs}
\usepackage{tikz-cd}
\usepackage[all]{xy}
\usepackage{color}
\usepackage{tikz}
\usepackage{xcolor}
\usepackage{mathtools}
  \usepackage{enumitem}
  \usepackage[all]{xy}
\usepackage{color}
\usepackage{tikz}
  \usepackage{array}
  \usepackage{amsthm}

\usepackage[colorlinks=true, pdfstartview=FitV,linkcolor=blue,citecolor=blue,urlcolor=blue]{hyperref}

    \advance \topmargin by -\headheight
    \advance \topmargin by -\headsep

    \evensidemargin \oddsidemargin
\numberwithin{equation}{section}

\DeclareMathOperator{\Res}{Res}

\DeclareMathOperator{\Id}{id}
\newcommand{\C}{\mathbb C}

\newcommand{\slTwo}{\mathfrak{sl}_2}

\newcommand{\cS}{\mathcal S}

\newcommand{\Dr}{\mathscr D}

\newcommand{\ee}{\mathsf E}
\newcommand{\ff}{\mathsf F}
\newcommand{\hh}{\mathsf H}
\newcommand{\dd}{\partial}
\newcommand{\norm}[1]{\mathopen{:}#1\mathclose{:}}

\def\cS{{\mathcal S}}

\newfont{\german}{eufm10}

\begin{document}

\newtheorem{maintheorem}{Main Theorem}
\newtheorem{thm}[subsection]{Theorem}
 \newtheorem{theorem}{Theorem}[section]
  \newtheorem{prop}[theorem]{Proposition}
    \newtheorem{proposition}[theorem]{Proposition}
  \newtheorem{cor}[theorem]{Corollary}
    \newtheorem{corollary}[theorem]{Corollary}
  \newtheorem{lemma}[theorem]{Lemma}
  \newtheorem{defn}[theorem]{Definition}
  \newtheorem{ex}[theorem]{Example}
    \newtheorem{conj}[theorem]{Conjecture}
     \newtheorem{remark}[theorem]{Remark}
          \newtheorem{example}[theorem]{Example}

 \newcommand{\cx}{{\bf C}}
\newcommand{\la}{\langle}
\newcommand{\ra}{\rangle}
\newcommand{\res}{{\rm Res}}
\newcommand{\expp}{{\rm exp}}
\newcommand{\lgp}{\widehat{\rm G}( { F} ((t)) )}
\newcommand{\svee}{\scriptsize \vee}
\newcommand{\deff}{\stackrel{\rm def}{=}}
\newcommand{\cal}{\mathcal}

\newcommand{\on}{\operatorname}
\newcommand{\mc}{\mathcal}

\title{A new $SL_2$-action on the rank-two $\beta\gamma$-system}

\author{Tomoyuki Arakawa $^{1}$}
\email{TOMOYUKI.ARAKAWA@OIST.JP}

\author{Xuanzhong Dai $^{2}$}
\email{xzdai@nju.edu.cn}

\author{Bailin Song $^{3}$}
\email{bailinso@ustc.edu.cn}

\address{$^1$ Okinawa Institute of Science and Technology
1919-1 Tancha, Onna-son, Kunigami-gun
Okinawa, Japan 904-0495}

\address{$^2$ School of Mathematics, Nanjing University,
Nanjing 210093, China}
\address
{$^3$ School of Mathematical Sciences, University of Science and Technology of China, Hefei, Anhui 230026, P. R. CHINA}

\thanks{}

\maketitle

\maketitle

\begin{abstract}
We construct an explicit rational \(\mathrm{SL}_2(\mathbb C)\)-action
on the underlying vector space of the rank-two \(\beta\gamma\)-system.
The construction is motivated by Creutzig's branching decomposition
with respect to \(L_{-1}(\mathfrak{sl}_2)\) and a rank-one Heisenberg
vertex algebra. We realize the raising and lowering operators as
residues of lattice-type fields, prove that they preserve the original
\(\beta\gamma\)-system despite being defined on localizations, and
establish their commutation relations and integrability.
The resulting action commutes with the affine
\(L_{-1}(\mathfrak{sl}_2)\)-action and all nonzero Heisenberg modes,
while the Heisenberg zero mode serves as its Cartan operator.
Each affine multiplicity space is thereby identified with
\(V_n\otimes\mathcal F_{\mathrm{osc}}\), where \(V_n\) is the irreducible
\(\mathrm{SL}_2(\mathbb C)\)-module of highest weight \(n\) and
\(\mathcal F_{\mathrm{osc}}\) is the Heisenberg oscillator Fock space.
Thus the Heisenberg charge decomposition is realized as the weight
decomposition of \(V_n\). The infinitesimal action is not by vertex
algebra derivations and does not preserve the symmetric conformal
grading.
\end{abstract}

\maketitle

\section{Introduction}\label{sec:intro}

Free-field vertex algebras provide explicit realizations of affine
and $\mathcal W$-algebras and a setting in which their representation
theory can be studied concretely. Their branching rules often reveal
structures that are not apparent from the operator product expansions
of the generating fields. In particular, the multiplicity spaces
arising from a vertex subalgebra may suggest additional Lie algebra
actions. Constructing such actions explicitly requires more than a
character identity: one must identify operators on the original
state space and establish their commutation relations and
integrability. The purpose of this note is to carry out such a
construction for the rank-two $\beta\gamma$-system.

Our motivation also comes from the correspondence between
four-dimensional $\mathcal N=2$ superconformal field theories and
two-dimensional vertex algebras introduced in \cite{BLLPRvR}.
This correspondence equips a protected sector of local operators
with a vertex algebra structure and identifies the Schur index
with a vacuum character, up to the conventional normalization.
Free hypermultiplets give rise to symplectic bosons, or
$\beta\gamma$-systems with generators of conformal weight
$\frac12$. These free-field examples therefore provide a natural
setting in which to investigate the algebraic structures underlying
protected-state counting. In \cite{C}, Creutzig studied vertex
algebras associated with the $(A_1,A_{2p-3})$ and
$(A_1,D_{2p})$ Argyres--Douglas theories, relating their characters
to Schur indices. The rank-two $\beta\gamma$-system appears there
as the $p=1$ starting point for the analysis of the $D_{2p}$ family.

Let $\cS=\cS(\C^2)$ be the rank-two $\beta\gamma$-system generated by
$\beta_i,\gamma_i$, $i=1,2$, with operator product expansions
\begin{equation}\label{eq:bg}
 \beta_i(z)\gamma_j(w)\sim\frac{\delta_{ij}}{z-w},
 \qquad
 \gamma_i(z)\beta_j(w)\sim-\frac{\delta_{ij}}{z-w},
\end{equation}
all other pairs of generators having regular OPEs. The standard
bilinear currents generate a vertex subalgebra
$L_{-1}(\mathfrak{sl}_2)\otimes\pi$, where
$L_{-1}(\mathfrak{sl}_2)$ is the simple affine vertex algebra
at level $-1$ (see Section \ref{sec:creutzig-decomposition})
 and $\pi$ is the rank-one Heisenberg vertex algebra
generated by
\begin{equation}\label{eq:h}
 h=\norm{\beta_1\gamma_1}+\norm{\beta_2\gamma_2}
\end{equation}
with OPE
\[
 h(z)h(w)\sim-\frac{2}{(z-w)^2}.
\]
The zeroth mode $h_0$ acts semisimply on $\cS$, assigning charge
$1$ to each $\gamma_i$ and charge $-1$ to each $\beta_i$.

The immediate motivation for our construction is Creutzig's
branching formula \cite[Proposition~5.1]{C}. Let $L_{-1}(n)$ denote
the simple highest-weight affine $\mathfrak{sl}_2$-module at
level $-1$ whose top is the $(n+1)$-dimensional
$\mathfrak{sl}_2$-module, and let $\pi_s$ denote the Heisenberg
Fock module of $h_0$-charge $s$. Then
\begin{equation}\label{eq:creutzig-decomposition}
 \cS
 \cong
 \bigoplus_{n\geq0}
 L_{-1}(n)\otimes
 \left(\bigoplus_{j=0}^{n}\pi_{n-2j}\right)
\end{equation}
as an $L_{-1}(\mathfrak{sl}_2)\otimes\pi$-module.
For each $n$, the charges
\[
 n,\ n-2,\ \ldots,\ -n
\]
are precisely the weights of the irreducible
$\mathfrak{sl}_2$-module of highest weight $n$.
This pattern suggests an additional $\mathfrak{sl}_2$-action
relating the Heisenberg Fock summands in each affine multiplicity
space, with $h_0$ as its Cartan operator.

The question is therefore whether this pattern can be realized
by explicit raising and lowering operators on $\cS$.
We construct such operators using residues of lattice-type fields 
associated with Heisenberg exponentials. Although the construction initially
takes place in a localization of $\cS$, the resulting operators
preserve $\cS$ itself and integrate to an $SL_2(\C)$-action.
This action is an action on the underlying vector space, and 
its infinitesimal action is not by vertex algebra derivations.
Accordingly, the construction gives additional representation
theoretic structure on the state space, but does not by itself
establish an additional four-dimensional flavor symmetry.
It provides a concrete setting in which to examine the relation
between branching patterns and symmetries of operator products.

We now describe the construction. The assignments
\begin{equation}\label{eq:S}
 \begin{aligned}
 S(\gamma_1)&=\beta_2,
 &\qquad S(\gamma_2)&=-\beta_1,\\
 S(\beta_1)&=-\gamma_2,
 &\qquad S(\beta_2)&=\gamma_1
 \end{aligned}
\end{equation}
preserve the OPEs \eqref{eq:bg} and hence extend to a
vertex algebra automorphism $S$ of $\cS$. It satisfies
\[
 S^2=\Id,\qquad S(h)=-h,
\]
and consequently $Sh_0S=-h_0$.

For an even Heisenberg field $K$ and a field $a$, consider the
formal expression
\begin{equation}\label{eq:dressing-intro}
 \Dr_K(a;z)
 :=
 \exp\!\left(\sum_{m<0}\frac{K_m}{m}z^{-m}\right)
 a(z)z^{-K_0}
 \exp\!\left(\sum_{m>0}\frac{K_m}{m}z^{-m}\right),
\end{equation}
whenever it is well defined on the space under consideration.
We call $\Dr_K(a;z)$ the \emph{lattice-type field} associated
with $a$ and $K$, and its residue the corresponding
\emph{lattice-type residue}. In particular on the localization
$\cS[\gamma_1^{-1}]$, we set
\begin{equation}\label{eq:free-P}
 A=\norm{\gamma_1^{-1}\beta_2},
 \qquad
 P=\Res_z\Dr_h(A;z).
\end{equation}
Here is the main theorem of this note.

\begin{theorem}\label{thm:main}
The following statements hold.
\begin{enumerate}
 \item
 The operator $P$ defined in \eqref{eq:free-P} preserves $\cS$.
 Moreover, the operators
 \begin{equation}\label{eq:triple}
  \ee:=SPS,\qquad
  \ff:=P,\qquad
  \hh:=h_0
 \end{equation}
 satisfy the commutation relations
 \begin{equation}\label{eq:sl2}
  [\hh,\ee]=2\ee,\qquad
  [\hh,\ff]=-2\ff,\qquad
  [\ee,\ff]=\hh.
 \end{equation}
 Thus they define an $\slTwo$-action on $\cS$.

 \item
 For every integer $n\geq0$, the subspace
 \[
  V_n
  :=
  \operatorname{span}_{\C}
  \bigl\{P^j(\gamma_1^n)\mid 0\leq j\leq n\bigr\}
  \subseteq\cS
 \]
 is an irreducible $\slTwo$-module of highest weight $n$,
 with highest-weight vector $\gamma_1^n$.

 \item
 This $\slTwo$-action integrates uniquely to a rational
 representation of $SL_2(\C)$ on $\cS$.
\end{enumerate}
\end{theorem}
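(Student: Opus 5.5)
The plan is to reduce everything to a computation on a small set of generating vectors, using the branching decomposition \eqref{eq:creutzig-decomposition}. The first and main step is to show that $P$ commutes with the affine $L_{-1}(\slTwo)$-action and with all nonzero Heisenberg modes $h_m$, $m\neq0$. I would do this on the localization $\cS[\gamma_1^{-1}]$.

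For $h_m$ with $m\neq0$, the argument runs as follows. Since $A=\norm{\gamma_1^{-1}\beta_2}$ has $h_0$-charge $-2$, we get $[h_m,A(z)]=-2z^mA(z)$. The exponentials in \eqref{eq:dressing-intro} produce, through $h(z)h(w)\sim-2/(z-w)^2$, a compensating term $+2z^m$. Hence $[h_m,\Dr_h(A;z)]=0$ for $m\neq0$. The same bookkeeping gives $[h_0,\Dr_h(A;z)]=-2\Dr_h(A;z)$, so $[\hh,\ff]=-2\ff$. Applying $S$ and $Sh_0S=-h_0$ then gives $[\hh,\ee]=2\ee$.

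For the $\slTwo$ currents, which are bilinears $\norm{\gamma_i\beta_j}$ commuting with $h$, I would compute their OPE with $A(w)$ directly. The aim is to show that the singular part of the OPE with the dressed field is a total $w$-derivative, so that its residue vanishes. Since $A$ is $\slTwo$-invariant up to the $h$-charge twist absorbed by the dressing, I expect only this total-derivative term to survive. This is a routine but careful Wick computation.

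The second step is to compute $P$ explicitly on the vectors $\gamma_1^{a}\beta_2^{b}$. Only finitely many modes of the dressing act nontrivially on such low-weight vectors, and the factor $z^{-h_0}$ merely shifts exponents. I expect a formula of the shape
\[
P(\gamma_1^{a}\beta_2^{b})=c_{a,b}\,\gamma_1^{a-1}\beta_2^{b+1}+(\text{terms generated from lower vectors by }h_{-m},\ m>0),
\]
with an explicit nonzero $c_{a,b}$; at least $P(\gamma_1^n)=n\,\gamma_1^{n-1}\beta_2$ up to normalization. These vectors, for $a+b=n$, are the generating (affine highest-weight times Heisenberg vacuum) vectors of the summands $L_{-1}(n)\otimes\pi_{n-2b}$ in \eqref{eq:creutzig-decomposition}. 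Combining this with the first step, $P$ maps each summand of $\cS$ into $\cS$, which proves that $P$ preserves $\cS$. For $[\ee,\ff]=\hh$, the operator $[\ee,\ff]-\hh$ also commutes with the affine algebra and with $h_m$, $m\neq0$. It therefore suffices to check the relation on $\gamma_1^{a}\beta_2^{b}$, where $\ee=SPS$ is computed from $S(\gamma_1)=\beta_2$, $S(\beta_2)=\gamma_1$; this is a check of constants $c_{a,b}$ against the standard $\slTwo$ matrix coefficients.

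Part (2) then follows from the same formulas. We have $\ee(\gamma_1^n)=0$ because the result would have charge $n+2$ in the charge-$\le n$ part of the relevant multiplicity space. Moreover $P^j(\gamma_1^n)\neq0$ for $j\le n$ and $P^{n+1}(\gamma_1^n)=0$. For part (3), \eqref{eq:creutzig-decomposition} shows that $\cS$ is a direct sum of finite-dimensional $\slTwo$-modules $V_n\otimes(\text{oscillator states})$, because $\ee,\ff$ commute with the nonzero Heisenberg modes. Hence $\hh$ is semisimple with integer eigenvalues and $\ee,\ff$ are locally nilpotent. The exponentials $\exp(t\ee),\exp(t\ff)$ then define a rational $SL_2(\C)$-action. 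Uniqueness holds since $SL_2(\C)$ is connected and generated by these unipotent subgroups.

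The step I expect to be hardest is the first: making the dressed OPE computation with the affine currents rigorous on the localization. In particular, one must show that the $z^{-h_0}$ factor and the normal-ordered inverse $\gamma_1^{-1}$ give a well-defined residue whose commutator with the currents is exactly a total derivative.
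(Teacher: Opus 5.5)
There is a genuine gap, in your Step~2. Your Step~1 is correct and matches the paper's Proposition~\ref{prop:commuting-right-action}. Once part (1) is known, your integrability argument is also a valid alternative to the paper's $[L_0,P]=P(h_0-1)$ argument: the charges in each $M_n$ lie in $\{-n,\dots,n\}$, so $\ee^{n+1}$ and $\ff^{n+1}$ vanish there.

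Step~2 fails because, for $a,b\geq1$, the vectors $\gamma_1^a\beta_2^b$ are \emph{not} generators of $L_{-1}(a+b)\otimes\pi_{a-b}$. Both the preservation of $\cS$ and your check of $[\ee,\ff]=\hh$ rest on this. Already $\gamma_1\beta_2=E_{-1}|0\rangle$ is the state of the current $E=\norm{\beta_2\gamma_1}$ and lies in the vacuum summand. Consistently, for the affine current $F=\norm{\beta_1\gamma_2}$ one has $F_1(\gamma_1\beta_2)=-|0\rangle\neq0$. In general $\gamma_1^a\beta_2^b=E_{-1}^{\min(a,b)}$ applied to $\gamma_1^{a-b}$ or $\beta_2^{b-a}$, so it lies in $L_{-1}(|a-b|)\otimes\pi_{a-b}$.

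The true generator of $L_{-1}(n)\otimes\pi_{n-2j}$ has conformal weight $\frac n2+j(n-j)$, and up to scalar it is $P^j\gamma_1^n$ itself. So reducing to generators is circular. Your predicted formulas also fail:
\begin{itemize}
\item $P(\gamma_1\beta_2)=E_{-1}P|0\rangle=0$, so the constant $c_{1,1}$ is zero rather than nonzero.
\item Since $[L_0,P]=P(h_0-1)$, the vector $P\gamma_1^n$ has weight $\frac n2+n-1$. Hence it is not a multiple of $\gamma_1^{n-1}\beta_2$ once $n\geq2$.
\end{itemize}

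Even with the correct generators, commuting with the affine and oscillator algebras only shows that $P$ of a generator is a doubly singular vector of $\cS[\gamma_1^{-1}]$. That space has many more such vectors than $\cS$, so you need an independent reason why the image lies in $\cS$. The paper supplies this with a second chart. Put $A'=-\norm{\gamma_2^{-1}\beta_1}$ and $C=\gamma_1^{-1}\gamma_2^{-1}$. Then $\norm{hC}=A-A'+\dd C$, hence $\Dr_h(A;z)-\Dr_h(A';z)=-\dd_z\Dr_h(C;z)$. Therefore $Pv\in\cS[\gamma_1^{-1}]\cap\cS[\gamma_2^{-1}]=\cS$.

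For $[\ee,\ff]=\hh$, your reduction leaves an unknown scalar to check on each $\pi_{n-2j}$, at vectors you cannot write down. Moreover, $\ee=SPS$ needs $\beta_2^{-1}$ while $\ff$ needs $\gamma_1^{-1}$, so you need a common localization and a way to compose the two residues. The missing idea is the bosonization on $\cS[\gamma_1^{-1},\beta_2^{-1}]$. With $\ell=\norm{A^{-1}\dd A}$ and $\alpha=\ell-h$ one has $\alpha(z)\alpha(w)\sim2(z-w)^{-2}$. Using the shift operator $U$, the fields $\Dr_h(A;z)$ and $\Dr_{-h}(A^{-1};z)$ become the rank-one lattice vertex operators for $\pm\alpha$. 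Their OPE, $(z-w)^{-2}+\alpha(w)(z-w)^{-1}$, yields $[\ee,\ff]=-\alpha_0=h_0$.

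Finally, part (2) should not come from the Step~2 formulas. Derive it from $\ee\gamma_1^n=0$, $\hh\gamma_1^n=n\gamma_1^n$ and local nilpotence via standard $\slTwo$ theory, or from the Vandermonde computation $P^n\gamma_1^n=(-1)^{n(n-1)/2}n!\,\beta_2^n$.
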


The new $SL_2(\C)$-action gives a representation theoretic
interpretation of the multiplicity spaces in
\eqref{eq:creutzig-decomposition}. Let
$\mathfrak h_{\mathrm{osc}}$ denote the Heisenberg algebra
generated by the nonzero modes of $h$ and the central element,
and let
\[
 \mathcal F_{\mathrm{osc}}
 =\C[h_{-1},h_{-2},\ldots]
\]
be its Fock representation, with commutation relations
$[h_r,h_s]=-2r\delta_{r+s,0}$.

\begin{theorem}\label{thm:joint-decomposition}
The $SL_2(\C)$-action of Theorem~\ref{thm:main} commutes with
the actions of the affine vertex algebra
$L_{-1}(\mathfrak{sl}_2)$ and the Heisenberg oscillator algebra
$\mathfrak h_{\mathrm{osc}}$ on $\cS$.
 There is an isomorphism
\begin{equation}\label{eq:intro-joint-decomposition}
 \cS
 \cong
 \bigoplus_{n\geq0}
 L_{-1}(n)\otimes V_n\otimes\mathcal F_{\mathrm{osc}}
\end{equation}
for these three mutually commuting actions, where $V_n$ is
the irreducible $SL_2(\C)$-module of highest weight $n$.

Under this identification, $h_0$ acts as the Cartan operator
$\hh$ on $V_n$. In particular, the weight decomposition of
$V_n$ recovers the Heisenberg Fock decomposition
\[
 V_n\otimes\mathcal F_{\mathrm{osc}}
 \cong
 \bigoplus_{j=0}^{n}\pi_{n-2j},
\]
and hence \eqref{eq:intro-joint-decomposition} recovers
the decomposition
\eqref{eq:creutzig-decomposition}.
\end{theorem}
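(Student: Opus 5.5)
The plan is to deduce Theorem~\ref{thm:joint-decomposition} from Theorem~\ref{thm:main} and the branching formula \eqref{eq:creutzig-decomposition}. The argument has two parts: commutation, then a multiplicity count.

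\textbf{Commutation.} Since $\hh=h_0$ commutes with all modes of the $\mathfrak{sl}_2$ currents $e,f,h'$ (these are neutral bilinears) and with $h_m$, it suffices to check that $\ff=P$ commutes with every mode of the currents $\norm{\beta_1\gamma_2}$, $\norm{\beta_2\gamma_1}$, $\norm{\beta_1\gamma_1}-\norm{\beta_2\gamma_2}$ and with $h_m$ for $m\neq0$. The same statement for $\ee=SPS$ then follows, because $S$ maps the affine currents to affine currents (up to sign and relabeling) and sends $h\mapsto -h$, so it normalizes $\mathfrak h_{\mathrm{osc}}$.

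For $P$ itself, I would work in the localization $\cS[\gamma_1^{-1}]$. There, for any field $X$ with $X(z)h(w)\sim0$, the dressing exponentials commute with $X$, so $[X_m,P]=\Res_z\Dr_h([X_m,A(z)];z)$. This reduces the question to the OPEs of the affine currents with $A=\norm{\gamma_1^{-1}\beta_2}$. One checks that $A$ has charge $-2$ under $h$ and is annihilated by the $\mathfrak{sl}_2$ currents: for instance $\norm{\beta_2\gamma_1}$ applied to $\gamma_1^{-1}\beta_2$ gives contributions that cancel. The expected outcome is that $A$ is a singular vector for $\widehat{\mathfrak{sl}}_2$ and that the current OPE with $A$ is regular.

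For $h_m$ with $m\neq0$, the dressing is designed exactly so that $[h_m,\Dr_h(A;z)]=z^m(\text{charge of }A\text{ paired with }h)\,\Dr_h(A;z)+z^m(\text{dressing contribution})\,\Dr_h(A;z)=0$. Here the factor $-2$ from $h(z)h(w)\sim -2/(z-w)^2$ together with the $h$-charge $-2$ of $A$ gives the cancellation. I expect this sign and normalization check to be the delicate computational point, but it is routine.

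\textbf{Decomposition.} Given commutation, $\cS$ is a module for $\widehat{\mathfrak{sl}}_2\otimes \mathfrak h_{\mathrm{osc}}\otimes SL_2$. By \eqref{eq:creutzig-decomposition}, the multiplicity space $M_n:=\operatorname{Hom}_{L_{-1}(\mathfrak{sl}_2)}(L_{-1}(n),\cS)$ is $\bigoplus_{j}\pi_{n-2j}$ as a $\pi$-module. It carries commuting actions of $SL_2$ and $\mathfrak h_{\mathrm{osc}}$, with $h_0$ acting as $\hh$.

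Let $\Omega_n=\{v\in M_n: h_mv=0\ \forall m>0\}$. By the Fock decomposition, $\Omega_n$ has $h_0$-weights $n,n-2,\dots,-n$, each with multiplicity one, and $M_n\cong\Omega_n\otimes\mathcal F_{\mathrm{osc}}$. Since $SL_2$ commutes with $h_m$, it preserves $\Omega_n$. A finite-dimensional rational $SL_2$-module with these weights, each of multiplicity one, must be $V_n$: its highest weight is $n$, which forces a copy of $V_n$, and that copy already exhausts the weights. This yields \eqref{eq:intro-joint-decomposition}, with $h_0=\hh$ on $V_n$.

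Concretely, the highest-weight vector $\gamma_1^n\in V_n$ of Theorem~\ref{thm:main}(2) realizes the top of the summand. The main obstacle is the first step, namely verifying rigorously that the affine currents commute with the localized lattice-type residue and that the result remains valid after restricting to $\cS$. For that restriction I would rely on Theorem~\ref{thm:main}(1).
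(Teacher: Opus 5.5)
The gap is in the commutation step, at the point you yourself flagged as the main obstacle. Your reduction $[X_m,P]=\Res_z\Dr_h([X_m,A(z)];z)$ for currents $X$ commuting with $h$ is fine. What fails is your expectation that $A=\norm{\gamma_1^{-1}\beta_2}$ is $\widehat{\mathfrak{sl}}_2$-singular with regular current OPEs.

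That expectation holds for $\norm{\beta_2\gamma_1}$, where there are no contractions at all. It also holds for $\norm{\beta_1\gamma_1}-\norm{\beta_2\gamma_2}$, where the simple poles $-A(w)/(z-w)$ coming from the two summands cancel. For the lowering current $F=\norm{\beta_1\gamma_2}$, however, both pairs $\beta_1$--$\gamma_1^{-1}$ and $\gamma_2$--$\beta_2$ contract, and one gets
\[
 F(z)A(w)\sim\frac{B(w)}{(z-w)^2}+\frac{\dd B(w)-\norm{hB}(w)}{z-w},\qquad B=\gamma_1^{-2}.
\]
Hence $[F_m,A(z)]=mz^{m-1}B(z)+z^m(\dd B-\norm{hB})(z)\neq0$, so the integrand of $[F_m,P]$ is not zero. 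Switching to the other chart $A'=-\norm{\gamma_2^{-1}\beta_1}$ of Lemma~\ref{lem:P-preserves-S} makes $F$ regular, but then $\norm{\beta_2\gamma_1}$ acquires a double pole. So pointwise regularity is not available in either chart.

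The missing idea is that the dressed commutator is a total $z$-derivative. Differentiate \eqref{eq:dressing-intro}: the $z$-derivatives of the two exponentials and of $z^{-h_0}$ reassemble $-\norm{hB}$, which gives $\dd_z\Dr_h(B;z)=\Dr_h(\dd B-\norm{hB};z)$. Therefore
\[
 [F_m,\Dr_h(A;z)]=mz^{m-1}\Dr_h(B;z)+z^m\Dr_h(\dd B-\norm{hB};z)=\dd_z\bigl(z^m\Dr_h(B;z)\bigr),
\]
and its residue vanishes. This is the same mechanism as in Lemma~\ref{lem:P-preserves-S}, and it is how the paper argues. Equivalently, you can treat $E$ in the chart of $A$ and $F$ in the chart of $A'$, using $P=\Res_z\Dr_h(A;z)=\Res_z\Dr_h(A';z)$ on $\cS$.

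With this repair the rest of your proposal goes through:
\begin{enumerate}
 \item the cancellation for $h_m$, $m\neq0$, is correct;
 \item the transfer to $\ee=SPS$ is correct, since $S$ actually fixes $E,F,H$ and sends $h_r\mapsto -h_r$;
 \item your identification of the Heisenberg vacuum space $\Omega_n\cong V_n$ from its weights $n,n-2,\dots,-n$ is correct.
\end{enumerate}
That last step is a valid alternative to the paper's use of the explicit vectors $P^j(\gamma_1^n)$ as Heisenberg highest-weight lines. It does not even need Theorem~\ref{thm:main}(2).
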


The decomposition in Theorem~\ref{thm:joint-decomposition}
also specifies how the new action interacts with the conformal
grading. For the conformal structure in which each
$\beta_i,\gamma_i$ has weight $\frac12$, we have
\[
 L_0
 =
 L_0^{\mathrm{aff}}
 +N_{\mathrm{osc}}
 -\frac14\hh^2,
 \qquad
 N_{\mathrm{osc}}
 =
 -\frac12\sum_{r>0}h_{-r}h_r.
\]
Thus $L_0+\frac14\hh^2$ commutes with the new $SL_2(\C)$-action,
whereas $L_0$ itself does not. More explicitly,
\[
 [L_0,\ff]=\ff(\hh-1),\qquad
 [L_0,\ee]=-\ee(\hh+1).
\]
The new action therefore relates the different Heisenberg
charge sectors within each affine multiplicity space,
with the change in conformal weight determined by the
Heisenberg zero-mode contribution.

The note is organized as follows.
In Section~\ref{sec:lattice}, we explicitly construct the
lattice-type fields on a local chart and show that their residues
restrict to a well-defined $\mathfrak{sl}_2$-action on the rank-two
$\beta\gamma$-system $\cS$, proving the first part of
Theorem~\ref{thm:main}.
In Section~\ref{sec:prop}, we establish basic properties of this
action, construct finite-dimensional submodules, and prove that
it integrates to an $SL_2(\C)$-action, completing the proof of
Theorem~\ref{thm:main}. We then describe its action on the
multiplicity spaces in Creutzig's decomposition of $\cS$ and
prove Theorem~\ref{thm:joint-decomposition}.

\section{Construction of $\mathfrak{sl}_2$-action}\label{sec:lattice}

\subsection{A common localization}
Set
\begin{equation}\label{eq:polarization}
 x=\gamma_1,\qquad y=\beta_2,\qquad
 p_x=\beta_1,\qquad p_y=-\gamma_2.
\end{equation}
Then $p_x(z)x(w)\sim p_y(z)y(w)\sim(z-w)^{-1}$ and the two
coordinate fields $x,y$ have regular mutual OPE.  This is another
beta--gamma polarization.  In the injective localization
\[
 \widetilde{\cS}=\cS[x^{-1},y^{-1}],
\]
we have
\begin{equation}\label{eq:A-h}
 A=\frac{y}{x},\qquad S(A)=A^{-1},\qquad
 h=\norm{p_xx}-\norm{p_yy}.
\end{equation}

Define
\begin{equation}\label{eq:ell}
 \ell=\norm{A^{-1}\dd A}
      =\norm{y^{-1}\dd y}-\norm{x^{-1}\dd x}.
\end{equation}
All coordinate-field products in this formula lie in a commutative vertex
subalgebra.  In particular, both $\dd A=\norm{\ell A}$ and $\ell$ have regular
OPE with $A^{\pm1}$.

\begin{lemma}\label{lem:alpha}
On $\widetilde{\cS}$,
\begin{equation}\label{eq:hell}
 \begin{aligned}
 h(z)h(w)&\sim-\frac2{(z-w)^2},&
 h(z)\ell(w)&\sim-\frac2{(z-w)^2},\\
 \ell(z)\ell(w)&\sim0,&
 h(z)A(w)&\sim-\frac{2A(w)}{z-w}.
 \end{aligned}
\end{equation}
Moreover, $\ell_0=0$.  Consequently the field
\begin{equation}\label{eq:alpha}
 \alpha=\ell-h
\end{equation}
satisfies
\begin{equation}\label{eq:alpha-OPE}
 \alpha(z)\alpha(w)\sim\frac2{(z-w)^2},\qquad
 \alpha_0=-h_0.
\end{equation}
\end{lemma}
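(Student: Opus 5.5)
We verify each OPE by Wick's theorem in the polarization \eqref{eq:polarization}, where the only nonregular contractions are $p_x(z)x(w)\sim p_y(z)y(w)\sim (z-w)^{-1}$ and $x(z)p_x(w)\sim y(z)p_y(w)\sim -(z-w)^{-1}$. Fields built only from $x^{\pm1},y^{\pm1}$ and their derivatives lie in a commutative vertex subalgebra. Therefore $\ell(z)\ell(w)\sim0$ is immediate, and $\ell$ and $\partial A$ have regular OPE with $A^{\pm1}$.

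For $h=\norm{p_xx}-\norm{p_yy}$, the two summands commute. Each is a standard rank-one $\beta\gamma$ current $J=\norm{p x}$ with $J(z)J(w)\sim -(z-w)^{-2}$, coming from the double contraction $p_x x\cdot p_x x$ with sign $(+1)(-1)$. Summing the two pieces gives $h(z)h(w)\sim -2(z-w)^{-2}$, consistent with the OPE stated in the introduction, which we may also simply quote.

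For the action of $h$ on functions of the coordinates we use that $\norm{p_xx}(z)$ acts on $f(x)(w)$ by a single contraction, $\norm{p_xx}(z)f(x(w))\sim \frac{x(w)\,\partial_xf(x(w))}{z-w}$, and similarly for $y$. This requires checking that the localization is compatible. I would justify it by noting that $x^{-1}$ is characterized by $\norm{x^{-1}x}=1$ and applying the derivation property of the singular OPE parts (the $0$-th and $1$-st products with $h$) to this identity.

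This gives
\[
h(z)x^{\pm1}(w)\sim \pm\frac{x^{\pm1}(w)}{z-w},\qquad h(z)y^{\pm1}(w)\sim\mp\frac{y^{\pm1}(w)}{z-w},
\]
hence $h(z)A(w)\sim -2A(w)/(z-w)$ for $A=yx^{-1}$.

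For $h(z)\ell(w)$, write $\ell=\norm{y^{-1}\partial y}-\norm{x^{-1}\partial x}$. The cleanest route is to set $\lambda_x=\norm{x^{-1}\partial x}$, which is formally $\partial\log x$. Take $\phi(w)$ with $\norm{p_xx}(z)\phi(w)\sim \frac{c\,\phi(w)}{z-w}$. Differentiating in $w$ gives
\[
\norm{p_xx}(z)\partial\phi(w)\sim \frac{c\,\phi(w)}{(z-w)^2}+\frac{c\,\partial\phi(w)}{z-w}.
\]
Applying Wick's theorem (a single contraction, since the coordinate fields commute among themselves) to $\norm{x^{-1}\partial x}$ gives
\[
\norm{p_xx}(z)\lambda_x(w)\sim \frac{1}{(z-w)^2}+\frac{-\lambda_x+\lambda_x}{z-w}=\frac{1}{(z-w)^2}.
\]
Likewise $\norm{p_yy}(z)\lambda_y(w)\sim (z-w)^{-2}$. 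Hence
\[
h(z)\ell(w)\sim -\frac{1}{(z-w)^2}-\frac{1}{(z-w)^2}=-\frac{2}{(z-w)^2}.
\]
I expect the main obstacle to be the careful treatment of the normal ordering of these nonlinear expressions in the localization, in particular checking that no extra double-contraction terms arise. This is handled by the observation that every contraction involves exactly one $p$, and each monomial in $h$ contains a single $p$.

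For $\ell_0=0$, note that $\lambda_x=\norm{x^{-1}\partial x}$ equals $\partial$ of something only formally. Instead we argue that $\ell_0=\operatorname{Res}_z\ell(z)$, and that $\ell=\norm{A^{-1}\partial A}$ lies in the commutative vertex algebra generated by $A^{\pm1}$. In a commutative vertex algebra, the modes act through a differential-algebra structure. On the vacuum-type part, $\ell(z)=e^{z\partial}\ell$ acts by multiplication, so $\ell_0$ is the coefficient of $z^{-1}$ in a power series, which is $0$, on elements of that subalgebra. For the action on all of $\widetilde{\cS}$ we use that $\ell_0$ commutes with $x,y$, since their OPE is regular. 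We then compute $[\ell_0,p_x]$ from the OPE $\ell(z)p_x(w)$, which gives $\operatorname{Res}$ of terms of the form $x^{-1}\partial$ (a total-derivative-type expression) that vanish. Combined with $\ell_0\mathbf 1=0$, this shows $\ell_0=0$ on the whole module, since $\widetilde{\cS}$ is generated from $\mathbf 1$ by the modes of $x^{\pm1},y^{\pm1},p_x,p_y$.

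Finally, bilinearity gives
\[
\alpha(z)\alpha(w)\sim \ell\ell-2\,h\ell+hh\sim 0+\frac{4}{(z-w)^2}-\frac{2}{(z-w)^2}=\frac{2}{(z-w)^2},
\]
using that $h(z)\ell(w)$ and $\ell(z)h(w)$ have the same symmetric double pole. Also $\alpha_0=\ell_0-h_0=-h_0$.
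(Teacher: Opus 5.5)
Your proposal is correct and follows essentially the paper's route: pairwise Wick contractions for the OPEs, and, for $\ell_0=0$, the fact that the zero mode is a derivation, checked on $\mathbf 1$ and the generators $x^{\pm1},y^{\pm1},p_x,p_y$. The one step you leave vague, $[\ell_0,p_x]=0$, is where the paper computes $\norm{x^{-1}\partial x}(z)p_x(w)\sim x^{-1}(w)/(z-w)^2$: the simple pole cancels between the Taylor expansion of the double-pole term and the contraction of $x^{-1}$ with $p_x$, and you should include that computation rather than appeal to a ``total-derivative-type expression.''
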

\begin{proof}
For one coordinate--momentum pair $p(z)t(w)\sim(z-w)^{-1}$, Wick
contraction gives
\[
 \norm{pt}(z)\norm{t^{-1}\dd t}(w)\sim\frac1{(z-w)^2}.
\]
Applying this separately to $(x,p_x)$ and $(y,p_y)$ proves the mixed
Heisenberg OPE, and the remaining OPEs in \eqref{eq:hell} follow directly.

To check the zero mode, we obtain
\[
 \norm{x^{-1}\dd x}(z)p_x(w)
       \sim\frac{x^{-1}(w)}{(z-w)^2},
\]
The same holds for $y$.  The OPEs with the coordinate
fields and their inverses are regular.  Since a zero mode is a derivation,
$\ell_0$ annihilates the entire localized vertex algebra.  Finally,
\eqref{eq:alpha-OPE} follows from \eqref{eq:hell} and $\ell_0=0$.
\end{proof}

\subsection{The charge-shifting operator}
\begin{lemma}\label{lem:U}
There is an invertible, $z$-independent operator $U$ on
$\widetilde{\cS}$ such that
\begin{equation}\label{eq:A-bosonized}
 A(z)=
 \exp\!\left(\sum_{n>0}\frac{\ell_{-n}}n z^n\right)
 U
 \exp\!\left(-\sum_{n>0}\frac{\ell_n}n z^{-n}\right).
\end{equation}
It satisfies
\begin{equation}\label{eq:U-charge}
 [\alpha_m,U]=2\delta_{m,0}U,\qquad
 [\alpha_m,U^{-1}]=-2\delta_{m,0}U^{-1}.
\end{equation}
\end{lemma}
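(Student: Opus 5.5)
We construct $U$ by simply defining it from the right-hand side of \eqref{eq:A-bosonized}. Set
\[
 E_\pm(z)=\exp\!\left(\mp\sum_{n>0}\frac{\ell_{\mp n}}{n}z^{\pm n}\right)
\]
with the signs chosen to match \eqref{eq:A-bosonized}, and put
\[
 U(z):=E_-(z)^{-1}A(z)E_+(z)^{-1},
\]
a formal series in $z^{\pm1}$ with coefficients in $\operatorname{End}\widetilde{\cS}$. Two points must be checked: that these expressions make sense on $\widetilde{\cS}$, and that $U(z)$ is independent of $z$.

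\emph{Well-definedness.} By Lemma~\ref{lem:alpha}, $\ell(z)\ell(w)\sim0$, so all modes $\ell_m$ commute and the exponentials are unambiguous. The positive modes $\ell_n$ are locally nilpotent on $\widetilde{\cS}$, since they lower the conformal weight and the localization is graded with weights bounded below on each fixed-charge piece. Hence $E_+(z)^{\pm1}$ applied to any vector is a polynomial in $z^{-1}$, and the product is a well-defined formal series.

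\emph{$z$-independence.} We compute $\partial_z U(z)$. Since $\ell$ has regular OPE with $A$, we get $[\ell_m,A(z)]=0$ for all $m$, so the exponentials commute with $A(z)$ up to reordering. Thus
\[
 \partial_z U=E_-^{-1}\bigl(\partial A(z)-\ell_-(z)A(z)-A(z)\ell_+(z)\bigr)E_+^{-1},
\]
where $\ell_\pm(z)$ are the creation and annihilation halves of $\ell(z)$. By \eqref{eq:ell}, $\partial A=\norm{\ell A}$, and since $\ell$ and $A$ commute this normal ordered product equals $\ell_-(z)A(z)+A(z)\ell_+(z)+z^{-1}\ell_0A(z)$. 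The last term vanishes because $\ell_0=0$ by Lemma~\ref{lem:alpha}, so $\partial_zU=0$ and $U$ is a constant operator.

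\emph{Invertibility.} We run the same argument for $A^{-1}=S(A)$, which is also a field on $\widetilde{\cS}$ satisfying $\partial A^{-1}=-\norm{\ell A^{-1}}$. This produces a constant operator $U'$ with exponentials of $-\ell$. Because $\ell$ commutes with everything in the commutative coordinate subalgebra, the product $A(z)A^{-1}(w)$ is regular and its value at $z=w$ is $\mathbf 1$. Comparing the two factorizations then gives $UU'=U'U=\Id$.

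\emph{Charge relations \eqref{eq:U-charge}.} Lemma~\ref{lem:alpha} gives $[\ell_m,\ell_n]=0$ and $[h_m,\ell_n]=-2m\delta_{m+n,0}$, hence
\[
 [\alpha_m,\ell_n]=2m\delta_{m+n,0}.
\]
From the OPEs, $[\ell_m,A(z)]=0$ and $[h_m,A(z)]=-2z^mA(z)$, so $[\alpha_m,A(z)]=2z^mA(z)$. Conjugating $\alpha_m$ past the exponential factors produces scalar shifts. For $m>0$, moving $\alpha_m$ through $E_-(z)^{-1}$ gives a shift $-2z^m$, which cancels the $2z^m$ coming from $A$, leaving $[\alpha_m,U]=0$. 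The case $m<0$ is symmetric, using $E_+(z)^{-1}$. For $m=0$, $\alpha_0$ commutes with every $\ell_n$, so $[\alpha_0,U]=2U$. The relations for $U^{-1}$ follow at once.

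The main obstacle is the $z$-independence step, specifically verifying that $\norm{\ell A}$ splits exactly as claimed in the localized setting with no extra contraction terms. The key input is that $\ell$ and $A$ lie in a commutative vertex subalgebra. A secondary difficulty is confirming the local nilpotence of $\ell_n$ for $n>0$ on $\widetilde{\cS}$.
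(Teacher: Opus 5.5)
Your proof is correct and follows essentially the same route as the paper. You strip the $\ell$-exponentials from $A(z)$, use $[\ell_m,A(z)]=0$, $\partial A=\norm{\ell A}$ and $\ell_0=0$ to get $\partial_zU(z)=0$, build the inverse from $A^{-1}$, and obtain \eqref{eq:U-charge} by commuting modes through the exponentials; you do this last step directly with $\alpha_m$ rather than with $h_m$ plus $[\ell_m,U]=0$, which is equivalent.

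Two cosmetic fixes are needed. First, in your definition of $E_\pm$ the subscripts $\mp n$ and exponents $\pm n$ should read $\pm n$ and $\mp n$, to match how you use $E_\pm$. Second, justify the local nilpotence of $\ell_n$ ($n>0$) using the grading in which $x,y$ have weight $0$ and $p_x,p_y$ weight $1$. In that grading all weights on $\widetilde{\cS}$ are $\geq0$. In the symmetric grading this argument does not work, because the $h_0$-charge-zero vectors $x^{-k}y^{-k}$ have weight $-k$.
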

\begin{proof}
Define $U(z)$ by multiplying $A(z)$ on the left and right by the inverses
of the exponentials in \eqref{eq:A-bosonized}.  All $\ell$-modes commute
with one another and with $A(z)$.  Because $\ell_0=0$ and
$\dd A=\norm{\ell A}$, differentiation shows $\dd_z U(z)=0$.
Thus $U(z)=U$ is independent of $z$.  Repeating the construction for
$A^{-1}$ gives its inverse, since the coordinate fields $A$ and $A^{-1}$
multiply to $1$ and commute with the exponential terms of $\ell$.

The OPEs imply
\[
 [h_m,A(z)]=-2z^mA(z),\qquad
 [h_m,\ell_n]=-2m\delta_{m+n,0}.
\]
For $m\ne0$ the commutator with the appropriate exponential cancels the
factor $-2z^m$, and hence $[h_m,U]=0$.  For $m=0$ one gets $[h_0,U]=-2U$.
Thus we deduce (\ref{eq:U-charge}), as all $\ell_m$ commute with $U$.  
\end{proof}

\subsection{The mixed operator product}
On the common localization define
\begin{equation}\label{eq:FG}
 F(z)=\Dr_h(A;z),\qquad G(z)=\Dr_{-h}(A^{-1};z),
\end{equation}
whose residues restrict to $P$ and $SPS$, respectively.  Substituting
\eqref{eq:A-bosonized} and using $\alpha=\ell-h$ gives
\begin{align}
 F(z)&=
 \exp\!\left(\sum_{n>0}\frac{\alpha_{-n}}n z^n\right)
 U z^{\alpha_0}
 \exp\!\left(-\sum_{n>0}\frac{\alpha_n}n z^{-n}\right),
 \label{eq:F-lattice}\\
 G(z)&=
 \exp\!\left(-\sum_{n>0}\frac{\alpha_{-n}}n z^n\right)
 U^{-1}z^{-\alpha_0}
 \exp\!\left(\sum_{n>0}\frac{\alpha_n}n z^{-n}\right).
 \label{eq:G-lattice}
\end{align}

We recall the definition in (\ref{eq:triple}) that $\ff=P=\Res_z F(z)$ and $\ee=SPS=\Res_z G(z)$.

\begin{proposition}\label{prop:mixed}
The fields in \eqref{eq:F-lattice}--\eqref{eq:G-lattice} are mutually local
and have singular parts
\begin{equation}\label{eq:mixed-OPE}
 \begin{aligned}
 F(z)G(w)&\sim\frac1{(z-w)^2}+\frac{\alpha(w)}{z-w},\\
 G(z)F(w)&\sim\frac1{(z-w)^2}-\frac{\alpha(w)}{z-w}.
 \end{aligned}
\end{equation}
\end{proposition}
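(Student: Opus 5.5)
This is the standard lattice vertex operator computation, with $\alpha$ playing the role of a rank-one lattice Heisenberg field of norm $2$. We work throughout on $\widetilde{\cS}$ and write
\[
 E^\pm_\epsilon(z)=\exp\Bigl(\pm\epsilon\sum_{n>0}\frac{\alpha_{\mp n}}{n}z^{\pm n}\Bigr),\qquad \epsilon=\pm1,
\]
so that
\[
 F(z)=E^-_{+}(z)\,U z^{\alpha_0}E^+_{+}(z),\qquad G(z)=E^-_{-}(z)\,U^{-1}z^{-\alpha_0}E^+_{-}(z),
\]
by \eqref{eq:F-lattice}--\eqref{eq:G-lattice}.

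The inputs we need are:
\begin{itemize}
 \item the Heisenberg relations $[\alpha_m,\alpha_n]=2m\delta_{m+n,0}$, from Lemma~\ref{lem:alpha};
 \item the relation \eqref{eq:U-charge}, which says $U^{\pm1}$ commutes with all $\alpha_m$, $m\neq0$, and shifts $\alpha_0$ by $\pm2$.
\end{itemize}
From the second we get $z^{\alpha_0}U^{-1}=z^{-2}U^{-1}z^{\alpha_0}$ and $z^{-\alpha_0}U=z^{-2}Uz^{-\alpha_0}$. The first gives the BCH-type exchange
\[
 E^+_{\epsilon}(z)E^-_{\epsilon'}(w)=\Bigl(1-\frac wz\Bigr)^{2\epsilon\epsilon'}E^-_{\epsilon'}(w)E^+_{\epsilon}(z).
\]

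Moving all annihilation parts to the right, we obtain on $\widetilde{\cS}$
\[
 F(z)G(w)=(z-w)^{-2}\,\norm{F(z)G(w)},\qquad G(z)F(w)=(z-w)^{-2}\,\norm{G(z)F(w)}.
\]
Here
\[
 \norm{F(z)G(w)}=E^-_+(z)E^-_-(w)\,UU^{-1}\,z^{\alpha_0}w^{-\alpha_0}\,E^+_+(z)E^+_-(w).
\]
The two factors of $-2$ combine as follows: the exchange factor $(1-w/z)^{-2}$ together with $z^{-2}$ from commuting $z^{\alpha_0}$ past $U^{-1}$ yields $(z-w)^{-2}$. For $G(z)F(w)$ one uses $U^{-1}U=UU^{-1}=1$ from Lemma~\ref{lem:U}.

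The normal-ordered products are symmetric under $(z,G)\leftrightarrow(w,F)$. All $\alpha_{\pm n}$ commute among creation modes and among annihilation modes, and $UU^{-1}=U^{-1}U$. So the two expressions agree as formal distributions after expansion in the respective domains. Multiplying by $(z-w)^2$ therefore gives locality of $F$ and $G$. The same computation gives self-locality of $F$ and of $G$, with factor $(z-w)^{2}$; this is not required here.

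For the singular part, we Taylor-expand $\norm{F(z)G(w)}$ in $z$ around $w$ to first order. At $z=w$ all exponentials and the $\alpha_0$-powers cancel, giving $1$. For the first derivative, $\partial_z$ acting on the exponentials and on $z^{\alpha_0}$ produces
\[
 \sum_{n>0}\alpha_{-n}w^{n-1}+\alpha_0w^{-1}+\sum_{n>0}\alpha_nw^{-n-1}=\alpha(w),
\]
inserted in normal order and then evaluated at $z=w$. Hence
\[
 F(z)G(w)\sim\frac1{(z-w)^2}+\frac{\alpha(w)}{z-w}.
\]
For $G(z)F(w)$ the signs of all exponents and of $\alpha_0$ flip, so the linear term is $-\alpha(w)$.

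The main obstacle is to justify these manipulations on $\widetilde{\cS}$ itself rather than on a genuine lattice Fock space. One must check that:
\begin{itemize}
 \item $z^{\pm\alpha_0}$ makes sense, since $\alpha_0=-h_0$ acts semisimply with integer eigenvalues on $\widetilde{\cS}$;
 \item the annihilation exponentials act locally finitely, because $\widetilde{\cS}$ is a positive-energy module for the $\alpha_n$, $n>0$, in a suitable grading.
\end{itemize}
The cleanest way to handle both points is to avoid expanding exponentials abstractly. Instead, one applies the identities to an arbitrary vector $v$, with $\alpha_n v=0$ for $n\gg0$, so that every expression becomes a finite sum times formal series in $w/z$.
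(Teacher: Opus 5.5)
Your proposal is correct and follows essentially the paper's own argument. The oscillator exchange factor $(1-w/z)^{-2}$ times the zero-mode factor $z^{-2}$ gives $(z-w)^{-2}$. The normally ordered remainder equals $1$ at $z=w$ and has $z$-derivative $\alpha(w)$ there. The remainder is symmetric under swapping the two fields, which gives locality. Your remarks on why $z^{\pm\alpha_0}$ and the annihilation exponentials make sense on $\widetilde{\cS}$ are a useful supplement to the paper.

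One cosmetic slip: your displayed definition makes $E^+_\epsilon$ the creation part and $E^-_\epsilon$ the annihilation part. You use them the other way round in $F(z)=E^-_+(z)\,Uz^{\alpha_0}E^+_+(z)$ and in the exchange identity, so the superscripts in the definition should be swapped.
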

\begin{proof}
Since $[\alpha_m,\alpha_n]=2m\delta_{m+n,0}$, commuting the positive-mode
exponential of $F(z)$ through the negative-mode exponential of $G(w)$
contributes
\[
 \exp\!\left(2\sum_{n>0}\frac{(w/z)^n}{n}\right)
 =(1-w/z)^{-2}
\]
in the $|z|>|w|$ formal expansion.  From
$z^{\alpha_0}U^{-1}=z^{-2}U^{-1}z^{\alpha_0}$, the zero modes contribute
$z^{-2}$.  Thus the scalar singular factor is $(z-w)^{-2}$.
After removing it, the regular factor is
\begin{align*}
 R(z,w)={}&
 \exp\!\left(\sum_{n>0}\frac{\alpha_{-n}}n(z^n-w^n)\right)
 (z/w)^{\alpha_0}\\
 &\quad\cdot
 \exp\!\left(-\sum_{n>0}\frac{\alpha_n}n(z^{-n}-w^{-n})\right).
\end{align*}
It satisfies $R(w,w)=\Id$ and
$\left.\dd_zR(z,w)\right|_{z=w}=\alpha(w)$.  This proves the first OPE.
Reversing the two charges gives the second.  The reversed radially ordered
products are the two expansions of the same expressions after
multiplication by $(z-w)^2$, which also proves mutual locality.
\end{proof}

\begin{lemma}\label{lem:P-preserves-S}
The operator $P$ preserves $\cS$.
\end{lemma}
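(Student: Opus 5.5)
The plan is to prove Lemma~\ref{lem:P-preserves-S} by induction on a PBW-type spanning set of $\cS$, moving $P=\Res_zF(z)$ past the modes of the four generators. Every error term should be either $P$ applied to a shorter monomial, or a residue of a lattice-type field $\Dr_h(b;z)$ with $b\in\cS$. The second kind of term is harmless. The exponentials $\exp(\pm\sum \frac{h_m}{m}z^{-m})$ and $z^{-h_0}$ are built from modes of $h\in\cS$ and preserve $\cS$, since $h_0$ acts semisimply with integer charges. So if $b\in\cS$, every coefficient of $\Dr_h(b;z)$, in particular its residue, maps $\cS$ to $\cS$.

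First I will check the base case $P|0\rangle\in\cS$. On the vacuum the positive-mode exponential and $z^{-h_0}$ act trivially, and $A(z)|0\rangle=e^{z\dd}A$ contains only nonnegative powers of $z$. The negative-mode exponential also contains only nonnegative powers. Hence $\Res_zF(z)|0\rangle=0$.

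Next I will compute the braiding of $F(z)$ with each generator $g\in\{x,y,p_x,p_y\}$ of \eqref{eq:polarization}. Since $[h_m,g(w)]=q_g w^m g(w)$ with $q_g=\pm1$, commuting $g(w)$ through the exponentials and $z^{-h_0}$ produces a scalar factor $(z-w)^{-q_g}$ (in the $|z|>|w|$ expansion). For the coordinates $x,y$, which commute with $A$, this gives
\[
 F(z)g(w)=\frac{\pm\,\Dr_h\text{-type product}}{(z-w)^{q_g}}.
\]
In particular $F(z)x(w)$ is $(z-w)^{-1}$ times a normally ordered dressed product. A Borcherds/Jacobi-type identity for such generalized-local pairs then expresses $P\,x_{(n)}-x_{(n)}P$ through modes of $\Dr_h(\norm{xA};w)=\Dr_h(y;w)$ and its derivatives. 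The field $y=\beta_2$ lies in $\cS$, so these terms preserve $\cS$. The same computation handles $y$, where the factor has the opposite exponent and the product is $\norm{yA}=\norm{y^2x^{-1}}$.

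The main obstacle is the momenta $p_x,p_y$, and also $y$. Here the naive OPE of $A=y/x$ with $p_x$ produces $y x^{-2}$, and the product with $A$ contains $x^{-1}$. My approach is to use the extra factor $(z-w)^{\pm1}$ coming from the dressing. I will multiply $F(z)p_x(w)$ by the appropriate power of $(z-w)$ so that the two sides become equal as formal series, and then expand the resulting regular field around $z=w$. The hope is that the $x^{-1}$-terms from the contraction $\dd_w$-pair up and cancel against the Taylor terms of $\norm{Ap_x}$, leaving only coefficients that are polynomial in $x,y,p_x,p_y$ and their derivatives, for instance expressions such as $\norm{yp_x}$ or $\dd y$. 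I would first verify this explicitly on low monomials: $P\gamma_1$ should be $\pm\beta_2$, and $P\beta_1$ should be $\pm\gamma_2$ up to terms in $\cS$. I would then prove the cancellation in general by writing $\norm{A p_x}$ via \eqref{eq:A-bosonized}.

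If the direct cancellation proves awkward, I will fall back on the bosonized form \eqref{eq:F-lattice}. There $P$ is the residue of a genuine lattice vertex operator $Uz^{\alpha_0}\cdots$ in the rank-one lattice vertex algebra generated by $\alpha$. I would then identify $\cS$ inside $\widetilde{\cS}$ as the intersection of kernels of screening-type operators that commute with $\alpha$-lattice fields of charge $2$. Such screenings would be residues of fields with regular OPE with $F$, so $P$ commutes with them and preserves their joint kernel.
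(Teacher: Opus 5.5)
Your induction on PBW monomials can be made to work. As written, though, the step that carries the whole content of the lemma is only hoped for, and several things you assert about the exchange relations are wrong.

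First, $F(z)$ is not local with the fields of $\cS$. The factor $z^{-h_0}$ has no counterpart on the side of $g(w)$, so $g(w)F(z)$ picks up $(1-z/w)^{-q_g}=w^{q_g}(w-z)^{-q_g}$ rather than $(w-z)^{-q_g}$. Taking $\Res_z$ therefore yields twisted relations $P\,g(w)+w^{-q_g}g(w)\,P=(\text{correction})$, i.e.\ $Pg_{(n)}=-g_{(n-q_g)}P+\cdots$. These are not formulas for $Px_{(n)}-x_{(n)}P$.

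Second, you place the difficulty in the wrong generators. $\beta_2=y$ commutes with $A$. For $\beta_1=p_x$, the scalar factor $(z-w)^{+1}$ kills the $\delta$-function coming from $[A(z),p_x(w)]$. So neither produces a correction term: no $\norm{yA}$ or $yx^{-2}$ ever appears, and $P\beta_1=0$ rather than $\pm\gamma_2$. Carrying the computation out gives
\begin{gather*}
 P\beta_i(w)=-w\,\beta_i(w)P,\qquad
 P\gamma_1(w)+w^{-1}\gamma_1(w)P=\Dr_h(\beta_2;w),\\
 P\gamma_2(w)+w^{-1}\gamma_2(w)P=\Dr_h\bigl(\norm{A\gamma_2}-\norm{h\gamma_1^{-1}};w\bigr).
\end{gather*}

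The only delicate generator is $\gamma_2=-p_y$. Its charge $+1$, combined with the simple pole of $A(z)p_y(w)$, gives a double pole. The term $\norm{h\gamma_1^{-1}}$ then comes from differentiating the dressing ($E^{\pm}$ and $z^{-h_0}$) at $z=w$, so the cancellation cannot come from Taylor terms of $\norm{Ap}$ alone. The missing idea is the identity $\norm{A\gamma_2}-\norm{h\gamma_1^{-1}}=-\beta_1$. Both products contain $\norm{\gamma_1^{-1}\norm{\beta_2\gamma_2}}+\dd\gamma_1^{-1}$, and $\norm{h\gamma_1^{-1}}$ contains $\beta_1$ in addition.

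Your screening fallback does not supply this. Neither a description of $\cS\subset\widetilde{\cS}$ as a joint kernel nor the commutation of such screenings with $F$ is established.

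With the three relations above and $P|0\rangle=0$, your induction does close. That gives a valid alternative proof, with explicit twisted Leibniz rules for $P$ as a by-product. The paper avoids this generator-by-generator bookkeeping. It uses the analogous Wick identity $\norm{hC}=A-A'+\dd C$, with $A'=-\norm{\gamma_2^{-1}\beta_1}$ and $C=\gamma_1^{-1}\gamma_2^{-1}$, to show that also $P=\Res_z\Dr_h(A';z)$. Hence $Pv\in\cS[\gamma_1^{-1}]\cap\cS[\gamma_2^{-1}]=\cS$.
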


\begin{proof}
Set
\[
 \cS_i:=\cS[\gamma_i^{-1}]\quad(i=1,2),
 \qquad
 \cS_{12}:=\cS[\gamma_1^{-1},\gamma_2^{-1}],
\]
and define
\[
 A':=-\norm{\gamma_2^{-1}\beta_1},\qquad
 C:=\gamma_1^{-1}\gamma_2^{-1}.
\]
A Wick calculation in $\cS_{12}$ gives
\begin{align*}
 \norm{hC}
 &=
 \norm{\gamma_2^{-1}\beta_1}
 +\norm{\gamma_1^{-1}\beta_2}
 -\gamma_1^{-2}\gamma_2^{-1}\dd\gamma_1
 -\gamma_1^{-1}\gamma_2^{-2}\dd\gamma_2\\
 &=A-A'+\dd C.
\end{align*}
On the other hand, differentiating
\eqref{eq:dressing-intro} yields
\[
 \dd_z\Dr_h(C;z)
 =
 \Dr_h\bigl(\dd C-\norm{hC};z\bigr).
\]
 Hence
\[
 \Dr_h(A;z)-\Dr_h(A';z)
 =-\dd_z\Dr_h(C;z).
\]
Taking residues, we obtain
\begin{equation}\label{eq:P-two-charts}
 P=\Res_z\Dr_h(A;z)=\Res_z\Dr_h(A';z).
\end{equation}

For $v\in\cS$, the two expressions in
\eqref{eq:P-two-charts} belong to $\cS_1$ and $\cS_2$,
respectively. The residues are well defined by truncation:
the positive Heisenberg modes act locally nilpotently on
$\cS$, and the remaining field expansions are lower
truncated on each vector. Thus
\[
 Pv\in\cS_1\cap\cS_2
 \qquad\text{inside }\cS_{12}.
\]

Finally, a fixed PBW ordering identifies the underlying
vector spaces of these localizations with the corresponding
localizations of the polynomial ring
\[
 R=\C[\partial^r\beta_i,\partial^r\gamma_i
       \mid i=1,2,\ r\geq0].
\]
Since $\gamma_1$ and $\gamma_2$ are independent polynomial
variables,
\[
 R[\gamma_1^{-1}]\cap R[\gamma_2^{-1}]
 =R
 \qquad\text{inside }R[\gamma_1^{-1},\gamma_2^{-1}].
\]
Therefore $\cS_1\cap\cS_2=\cS$, and hence $Pv\in\cS$.
\end{proof}

\begin{proof}[Proof of (1) in  Theorem~\ref{thm:main}]
Since the fields $F(z)$ and $G(z)$ are mutually local, the
formal-distribution commutator formula applied to the OPE
\eqref{eq:mixed-OPE} yields
\[
    [G_0,F_0]=-\alpha_0=h_0.
\]
Furthermore, \eqref{eq:U-charge} and $\alpha_0=-h_0$ give
\[
 [h_0,F(z)]=-2F(z),\qquad [h_0,G(z)]=2G(z).
\]
Taking residues proves \eqref{eq:sl2} in the common localization.

Since $P$ preserves $\cS$ by Lemma \ref{lem:P-preserves-S}, so does $SPS$, and hence the identities restrict to $\cS$.
\end{proof}

\section{Basic properties of the $\mathfrak{sl}_2$-action}\label{sec:prop}

\subsection{Finite-dimensional representations}\label{sec:strings}
%
The coordinate vector $x^n$
satisfies
\begin{equation}\label{eq:xn}
 \alpha_mx^n=0\ (m>0),\qquad \alpha_0x^n=-nx^n,
 \qquad U^nx^n=y^n.
\end{equation}
For the last equality, note that on polynomials in the commuting coordinate
fields $x,y$, the constant operator $U$ is multiplication by $y/x$.

\begin{proposition}\label{prop:endpoint}
For $n\geq0$,
\[
 P^nx^n=(-1)^{n(n-1)/2}n!\,y^n.
\]
\end{proposition}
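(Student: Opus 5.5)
The plan is to compute $P^nx^n$ directly from the lattice-type realization \eqref{eq:F-lattice} of $F(z)$, using the standard vertex-operator normal-ordering calculation. We write
\[
 P^nx^n=\Res_{z_1}\cdots\Res_{z_n}F(z_1)\cdots F(z_n)\,x^n ,
\]
with the product expanded in the region $|z_1|>\cdots>|z_n|$. We then bring all positive-mode exponentials to the right, where they act trivially on $x^n$ by \eqref{eq:xn}.

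The first step is the basic exchange rules. From $[\alpha_m,\alpha_k]=2m\delta_{m+k,0}$, moving $\exp(-\sum_k\alpha_kz_i^{-k}/k)$ past $\exp(\sum_k\alpha_{-k}z_j^k/k)$ for $i<j$ produces the scalar factor
\[
 \exp\Bigl(-2\sum_{k>0}\tfrac{(z_j/z_i)^k}{k}\Bigr)=(1-z_j/z_i)^2 .
\]
From \eqref{eq:U-charge} we have $z^{\alpha_0}U=z^2Uz^{\alpha_0}$. Alternatively, one can keep $z_i^{\alpha_0}$ in place and evaluate it on the vector it meets. After all exponentials of the $z_j$ with $j>i$ have been moved right, that vector is $U^{n-i}x^n$ dressed by negative modes. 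Its $\alpha_0$-eigenvalue is $-n+2(n-i)=n-2i$, by \eqref{eq:xn} and \eqref{eq:U-charge}, and negative $\alpha$-modes do not change it.

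Collecting these factors gives
\[
 \prod_{i<j}(1-z_j/z_i)^2\prod_i z_i^{\,n-2i}
 =\prod_{i<j}(z_i-z_j)^2\prod_i z_i^{-n},
\]
since each $z_i$ is paired with exactly $n-i$ indices $j>i$. Together with $U^nx^n=y^n$ from \eqref{eq:xn}, this yields
\[
 F(z_1)\cdots F(z_n)\,x^n=\prod_{i<j}(z_i-z_j)^2\prod_i z_i^{-n}\,
 \exp\Bigl(\sum_{k>0}\tfrac{\alpha_{-k}}{k}(z_1^k+\cdots+z_n^k)\Bigr)y^n .
\]
This is a Laurent polynomial in the $z_i$ times a power series with only nonnegative powers. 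The region of expansion therefore no longer matters, and the iterated residue is simply the coefficient of $\prod_iz_i^{n-1}$.

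The final step is a degree count. $\prod_{i<j}(z_i-z_j)^2$ is homogeneous of total degree $n(n-1)$, which equals the total degree of the target monomial $\prod_i z_i^{\,n-1}$. Every nonconstant term of the exponential raises the total degree, so only the constant term $1$ of the exponential contributes. Hence
\[
 P^nx^n=\bigl[\textstyle\prod_i z_i^{n-1}\bigr]\prod_{i<j}(z_i-z_j)^2\cdot y^n .
\]
The needed coefficient is the equal-parameter case of Dyson's constant term identity:
\[
 \bigl[\textstyle\prod_iz_i^{n-1}\bigr]\prod_{i<j}(z_i-z_j)^2
 =(-1)^{n(n-1)/2}\,\mathrm{CT}\prod_{i\neq j}(1-z_i/z_j)
 =(-1)^{n(n-1)/2}\,n! .
\]
One can also obtain it by expanding the Vandermonde squared as $\sum_{\sigma,\tau}\mathrm{sgn}(\sigma\tau)\prod_iz_i^{\sigma(i)+\tau(i)-2}$ and keeping only the terms with $\tau=w_0\sigma$, where $w_0$ is the longest permutation; each of the $n!$ such terms contributes $\mathrm{sgn}(w_0)=(-1)^{n(n-1)/2}$.

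I expect the main care to be needed in the bookkeeping of the zero modes, namely getting the exponent $n-2i$ and the direction of the $U$-shift right, since a sign or shift error there would change the powers of $z_i$. The combinatorial identity itself is classical.
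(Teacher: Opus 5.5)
Your proposal is correct and follows essentially the same route as the paper. You normal-order $F(z_1)\cdots F(z_n)x^n$ to $\prod_i z_i^{-n}\prod_{i<j}(z_i-z_j)^2$ times the oscillator exponential on $y^n$, kill the nonconstant oscillator terms by degree count, and extract the coefficient of $\prod_i z_i^{n-1}$ from the pairs $\tau=w_0\sigma$ in the Vandermonde square. Your $\alpha_0$-eigenvalue bookkeeping (exponent $n-2i$) and the Dyson constant-term identity are just equivalent repackagings of the paper's ``$z_i^2$ per pair'' commutation and its permutation count.
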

\begin{proof}
The case $n=0$ is immediate.  For $n\geq1$, normal ordering the product of
$n$ copies of \eqref{eq:F-lattice}, and then applying it to $x^n$, gives
\begin{equation}\label{eq:nfold}
 \begin{split}
 F(z_1)\cdots F(z_n)x^n
 ={}&\left(\prod_{i=1}^n z_i^{-n}\right)
      \prod_{i<j}(z_i-z_j)^2\\
 &\quad\cdot
 \exp\!\left(\sum_{m>0}\frac{\alpha_{-m}}m
                       \sum_{i=1}^n z_i^m\right)y^n.
 \end{split}
\end{equation}
Here each same-charge oscillator contraction gives $(1-z_j/z_i)^2$,
and commuting $z_i^{\alpha_0}$ through the later copies of $U$ contributes
$z_i^2$ per pair, producing $(z_i-z_j)^2$.

Taking the $n$ residues asks for the coefficient of
$z_1^{n-1}\cdots z_n^{n-1}$ in the last two factors of \eqref{eq:nfold}.
The Vandermonde square already has total degree $n(n-1)$, exactly the
required total degree.  Only the constant term of the oscillator
exponential can contribute.  Thus
\begin{equation}\label{eq:vandermonde-coeff}
 P^nx^n=
 [z_1^{n-1}\cdots z_n^{n-1}]
 \prod_{i<j}(z_i-z_j)^2\,y^n.
\end{equation}
It remains to compute the relevant coefficient of the  square of Vandermonde determinant
$\det(z_i^{j-1})_{1\leq i,j\leq n}$.  
A term indexed by permutations
$\sigma,\tau\in S_n$ has the required exponents precisely when
$\tau(i)=n+1-\sigma(i)$ for every $i$.
Equivalently, $\tau=w_0\circ \sigma$, where $w_0$ is the reversing permutation such that
$w_0(i)=n+1-i.$
Therefore all contributing terms have the same sign, and the desired coefficient is
$
(-1)^{n(n-1)/2}n!.
$
This proves the formula.
\end{proof}

\begin{corollary}\label{cor:strings}
Set $v_j=P^j x^n$.  Then $v_0,\ldots,v_n$ are linearly independent,
$Pv_n=0$, and
\begin{equation}\label{eq:strings}
 \begin{aligned}
 \hh v_j&=(n-2j)v_j,\\
 \ff v_j&=v_{j+1}&& (0\leq j<n),\\
 \ee v_j&=j(n-j+1)v_{j-1}&& (1\leq j\leq n),\qquad
 \ee v_0=0.
 \end{aligned}
\end{equation}
Their span is the irreducible highest-weight $\slTwo$-module of dimension
$n+1$.
\end{corollary}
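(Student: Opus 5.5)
The plan is to use only the $\slTwo$-relations \eqref{eq:sl2} from part (1) of Theorem~\ref{thm:main}, together with Proposition~\ref{prop:endpoint} and \eqref{eq:xn}. The relations reduce the whole statement to two inputs: that $x^n$ is a highest-weight vector ($\ee x^n=0$) of weight $n$, and that $v_n\neq0$.

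First we settle the weights. By \eqref{eq:xn}, $\hh x^n=h_0x^n=-\alpha_0x^n=nx^n$. Since $[\hh,\ff]=-2\ff$, induction gives $\hh v_j=(n-2j)v_j$ for all $j$. The relation $\ff v_j=v_{j+1}$ holds by definition.

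Next we show $\ee x^n=0$. Apply the weight argument to the image instead: $\ee x^n$ has $h_0$-charge $n+2$. Here $x^n=\gamma_1^n$ lies in $\cS$, and $\ee$ preserves $\cS$. The vectors of $\cS$ of charge $n+2$ at the same $L_0$-degree ($n/2$ for the weight-$\frac12$ grading) must be polynomials of degree $n$ in $\gamma_1,\gamma_2$ and their derivatives with net charge $n+2$. No such vectors exist.

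We need to know how $\ee$ shifts $L_0$. From \eqref{eq:G-lattice}, or from the stated relation $[L_0,\ee]=-\ee(\hh+1)$ (which I would verify directly from the dressing formula), $\ee$ maps the $L_0=n/2$, charge-$n$ space to $L_0=n/2-(n+1)$. This is negative for $n\ge0$, so that graded piece of $\cS$ vanishes. If the $L_0$ commutator is not yet available, I would instead compute $\ee x^n=\Res_z G(z)x^n$ directly. Since $\alpha_m x^n=0$ for $m>0$, we get
\[
G(z)x^n=z^{n}\exp\!\left(-\sum_{m>0}\frac{\alpha_{-m}}m z^m\right)U^{-1}x^n,
\]
which has only nonnegative powers of $z$, so its residue is $0$. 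This direct computation is the cleaner route, and I would use it.

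Then the standard computation applies: $\ee v_j=\ee\ff v_{j-1}=\ff\ee v_{j-1}+\hh v_{j-1}$. Induction gives $\ee v_j=\bigl(\sum_{i<j}(n-2i)\bigr)v_{j-1}=j(n-j+1)v_{j-1}$, with $\ee v_0=0$.

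For $Pv_n=0$, set $w=v_{n+1}$. Then $\ee w=(n+1)\cdot 0\cdot v_n=0$, so $w$ is a singular vector of weight $-n-2$. On the other hand $\cS$ is a sum of finite-dimensional $\slTwo$-modules only once part (3) is proved, so I would avoid relying on that. Instead, apply the weight argument directly: $\ff=P$ acting on $x^n$ shifts $L_0$ by $-h_0+1$ per step from \eqref{eq:F-lattice}. Concretely, the factor $z^{\alpha_0}$ gives $v_j$ explicit $z$-exponent bookkeeping, and one compares with \eqref{eq:nfold} for $n+1$ insertions applied to $x^n$. That product acquires the factor $\prod z_i^{-n}\prod_{i<j}(z_i-z_j)^2$ times a power series, so the total degree is at least $n(n+1)-(n+1)n=0$. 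The required residue degree is $-(n+1)$, so the coefficient vanishes and $P^{n+1}x^n=0$.

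Linear independence of $v_0,\dots,v_n$ follows from their distinct $\hh$-weights, provided each $v_j\neq0$. This holds because $v_n\neq0$ by Proposition~\ref{prop:endpoint}, and $v_n=P^{n-j}v_j$. With these relations, the span is the standard $(n+1)$-dimensional module, which is irreducible because $\ee^j v_j$ is a nonzero multiple of $v_0$. The main obstacle is the vanishing statements $\ee x^n=0$ and $P^{n+1}x^n=0$; the residue-degree counts above are the argument I would try first for both.
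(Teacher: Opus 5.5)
Your proof is correct. Its skeleton matches the paper's: $x^n$ is a highest-weight vector of weight $n$, the relations \eqref{eq:sl2} give the coefficients $j(n-j+1)$, $v_n\neq0$ by Proposition~\ref{prop:endpoint}, and $v_{n+1}=0$. Where you differ is in how you obtain the two vanishing statements.

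The paper gets both from the single computation $Py^n=0$. In $\Dr_h(A;z)y^n$ the zero-mode factor is $z^n$, the positive modes act trivially, and $A(z)y^n$ is regular. Then $\ee x^n=SPy^n=0$ because $Sx^n=y^n$, and $Pv_n=0$ because Proposition~\ref{prop:endpoint} gives $v_n=(-1)^{n(n-1)/2}n!\,y^n$.

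Your direct evaluation of $\Res_zG(z)x^n$ via \eqref{eq:G-lattice} is just the $S$-conjugate of $Py^n=0$, so that step agrees in substance.

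For $P^{n+1}x^n=0$ you take a genuinely different route. You rerun \eqref{eq:nfold} with $n+1$ factors and note that the Vandermonde square now has degree $n(n+1)$. That exceeds the degree $(n+1)(n-1)$ of the monomial $z_1^{n-1}\cdots z_{n+1}^{n-1}$ selected by the residues, so the coefficient vanishes. This needs only $v_n\neq0$, not the explicit value of $v_n$. It also shows that the string length is forced by the same degree count that drives Proposition~\ref{prop:endpoint}: an exact match at $n$ factors, an overshoot at $n+1$. The paper's route is shorter because it reuses the identification $v_n\propto y^n$.

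One slip, which your final argument does not use: by \eqref{eq:integrability-L0-P}, $P$ shifts $L_0$ by $h_0-1$ (evaluated before applying $P$), not by $-h_0+1$. With the correct sign, the $L_0$ argument would also give $v_{n+1}=0$, since its weight would be $\frac n2-(n+1)<0$.
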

\begin{proof}
The original lattice-type formula shows $P(y^n)=0$, as its zero-mode factor is
$z^n$, the positive $h$-modes kill $y^n$, and $A(z)y^n$ has no negative
powers.  Since $Sx^n=y^n$, one also has $\ee x^n=SPy^n=0$.
Together with Proposition \ref{prop:endpoint}, we deduce that $v_n\ne0$ and $Pv_n=0$.
Hence the $\slTwo$-relations give \eqref{eq:strings}.  None of $v_0,\ldots,v_n$
can vanish, since otherwise $v_n$ would vanish, and their distinct
$h_0$-weights imply linear independence.  The irreducibility is also clear.
\end{proof}


\begin{lemma}
The actions of $P$ on the four generating states are
\begin{equation}\label{eq:generators}
 P\gamma_1=\beta_2,\qquad P\gamma_2=-\beta_1,
 \qquad P\beta_1=P\beta_2=0.
\end{equation}
\end{lemma}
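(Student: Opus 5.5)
The plan is to compute $P=\Res_z\Dr_h(\,\cdot\,;z)$ directly on each generating state. For each generator I will choose whichever of the two expressions in \eqref{eq:P-two-charts},
\[
 P=\Res_z\Dr_h(A;z)=\Res_z\Dr_h(A';z),
\]
makes the computation trivial. Here $A=\norm{\gamma_1^{-1}\beta_2}$ and $A'=-\norm{\gamma_2^{-1}\beta_1}$.

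First I reduce the dressing on a generator $v\in\{\beta_i,\gamma_i\}$. The OPE of $h$ with each of $\beta_i,\gamma_i$ has only a simple pole, since \eqref{eq:h} is bilinear and a single contraction occurs. Hence $h_mv=0$ for $m>0$, and the right exponential in \eqref{eq:dressing-intro} acts as the identity on $v$. The factor $z^{-h_0}$ gives $z^{-1}$ on $\gamma_i$ and $z^{+1}$ on $\beta_i$. The left exponential only contributes nonnegative powers of $z$, with constant term $1$. Therefore $Pv$ is a coefficient extraction:
\begin{itemize}
 \item for $v=\gamma_i$, $Pv$ is the $z^{0}$-coefficient of $\exp\bigl(\sum_{m<0}\frac{h_m}{m}z^{-m}\bigr)\,B(z)v$;
 \item for $v=\beta_i$, $Pv$ is the $z^{-2}$-coefficient of the same expression.
\end{itemize}
Here $B=A$ or $A'$.

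Next I determine the singular part of $B(z)v$ by Wick's theorem. For $\beta_1$ I use $B=A$. The only contraction is between $\gamma_1^{-1}$ and $\beta_1$, which gives a simple pole. For $\beta_2$ I use $B=A'$. Again the only contraction is between $\gamma_2^{-1}$ and $\beta_2$, a simple pole. In both cases $B(z)v$ has at worst a $z^{-1}$ term. Multiplying by a series in nonnegative powers of $z$ can never produce $z^{-2}$, so $P\beta_1=P\beta_2=0$.

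For $\gamma_1$ I use $B=A$, and for $\gamma_2$ I use $B=A'$. Then $B(z)v$ is regular, because $\beta_2$ and $\beta_1$ respectively have regular OPE with $\gamma_1$ and $\gamma_2$, and the coordinate fields commute. So the $z^0$-coefficient is simply the normally ordered product $B_{(-1)}v$. This gives $\norm{\gamma_1^{-1}\beta_2\gamma_1}=\beta_2$ and $-\norm{\gamma_2^{-1}\beta_1\gamma_2}=-\beta_1$, using commutativity of the coordinate subalgebra in the localization. As a consistency check, the first value agrees with Proposition~\ref{prop:endpoint} at $n=1$, and $P\beta_2=Py=0$ agrees with the proof of Corollary~\ref{cor:strings}.

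The main point needing care is the choice of chart. For example, computing $P\gamma_2$ with $A$ instead of $A'$ produces an extra $h_{-1}A_{(0)}\gamma_2$ term, and one would have to simplify a localized expression back into $\cS$. Switching to $A'$ via \eqref{eq:P-two-charts} avoids this entirely. The only other step to verify is that $\norm{\gamma_i^{-1}\,\beta_j\,\gamma_i}$ collapses to $\beta_j$ with no correction terms when $i\ne j$. This holds because $\beta_j$ has regular OPE with $\gamma_i^{\pm1}$ in that case, so no contractions occur.
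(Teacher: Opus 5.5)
Your proof is correct. For the one identity the paper writes out, $P\gamma_2=-\beta_1$, you take a different route.

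The paper works only in the chart $A=\norm{\gamma_1^{-1}\beta_2}$. There $A(z)\gamma_2$ has a simple pole with $A_{(0)}\gamma_2=\gamma_1^{-1}$, so the residue picks up two terms, $\norm{A\gamma_2}$ and $-\norm{h\gamma_1^{-1}}$. A quasi-associativity (Wick) expansion of each shows that $\norm{\gamma_1^{-1}\norm{\beta_2\gamma_2}}$ and $\dd\gamma_1^{-1}$ cancel between them, leaving $-\beta_1$.

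You instead use the chart-independence \eqref{eq:P-two-charts} from the proof of Lemma~\ref{lem:P-preserves-S}. You compute in the chart $A'=-\norm{\gamma_2^{-1}\beta_1}$, where $A'(z)\gamma_2$ is regular. The answer is then just $A'_{(-1)}\gamma_2$, evaluated in a commutative subalgebra with no correction terms.

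What your version buys is brevity and symmetry. The chart $A$ handles $\gamma_1,\beta_1$, the chart $A'$ handles $\gamma_2,\beta_2$, and your pole-order argument disposes of both $\beta_i$ uniformly. What it costs is that the Wick work is moved rather than removed. Your argument depends on the identity $\norm{hC}=A-A'+\dd C$ and on the derivative formula $\dd_z\Dr_h(C;z)=\Dr_h(\dd C-\norm{hC};z)$, which are what establish \eqref{eq:P-two-charts}.

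The paper's calculation is self-contained in a single chart and does not use that comparison. Together, the two computations give an independent consistency check of \eqref{eq:P-two-charts}.

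A minor point: switching to $A'$ for $\beta_2$ is unnecessary. $A(z)\beta_2$ is already regular, which is exactly the fact used for $P(y^n)=0$ in Corollary~\ref{cor:strings}.
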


\begin{proof}
We prove only the second identity, and the others follow by similar
calculations. Since the positive $h$-modes annihilate $\gamma_2$ and
$h_0\gamma_2=\gamma_2$, only two terms $\norm{A\gamma_2}$ and $-\norm{h\gamma_1^{-1}}$ contribute to the residue. Thus,
\begin{align*}
P\gamma_2
&=\norm{A\gamma_2}-\norm{h\gamma_1^{-1}}\\
&=\left(
    \norm{\gamma_1^{-1}(\norm{\beta_2\gamma_2})}
    +\dd\gamma_1^{-1}
  \right)
  -\left(
    \beta_1+\norm{\gamma_1^{-1}(\norm{\beta_2\gamma_2})}
    +\dd\gamma_1^{-1}
  \right)\\
&=-\beta_1.
\end{align*}
\end{proof}

\begin{proposition}\label{prop:notderivation}
The endomorphism $P$ is not a derivation of the vertex algebra $\cS$.
Consequently \eqref{eq:triple} is an action on the underlying vector space,
not an action by vertex-algebra derivations.
\end{proposition}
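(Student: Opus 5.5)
The plan is to argue by contradiction: assume $P$ is a derivation of $\cS$, compute $P^2(\gamma_1^2)$ in two ways, and compare. The first computation uses the Leibniz rule and the values of $P$ on generators in \eqref{eq:generators}. The second is the closed formula of Proposition~\ref{prop:endpoint}. The two answers will differ by a sign, which is the contradiction.

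\emph{Step 1: what "derivation" means here.} I take it to mean $P(a_{(k)}b)=(Pa)_{(k)}b+a_{(k)}(Pb)$ for all $a,b\in\cS$ and $k\in\mathbb Z$. We only need the case $k=-1$, the normally ordered product. The fields $\gamma_1$ and $\beta_2$ have mutually regular OPE, so their normally ordered products are commutative. Hence $\gamma_1^2=\gamma_1{}_{(-1)}\gamma_1$, and $\norm{\gamma_1\beta_2}=\norm{\beta_2\gamma_1}$, $\beta_2^2=\beta_2{}_{(-1)}\beta_2$ are unambiguous, with no correction terms.

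\emph{Step 2: the Leibniz computation.} Using $P\gamma_1=\beta_2$ and $P\beta_2=0$ from \eqref{eq:generators}, the Leibniz rule gives
\[
 P(\gamma_1^2)=\norm{\beta_2\gamma_1}+\norm{\gamma_1\beta_2}=2\norm{\gamma_1\beta_2}.
\]
Applying $P$ again gives
\[
 P^2(\gamma_1^2)=2\bigl(\norm{(P\gamma_1)\beta_2}+\norm{\gamma_1(P\beta_2)}\bigr)=2\beta_2^2=2y^2.
\]

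\emph{Step 3: the actual value.} Proposition~\ref{prop:endpoint} with $n=2$ gives
\[
 P^2x^2=(-1)^{1}\,2!\,y^2=-2y^2.
\]
Since $y^2=\beta_2^2\neq0$ in $\cS$, we get $2y^2\neq-2y^2$, which contradicts Step~2. Therefore $P$ is not a derivation. The consequence follows at once: $\ff=P$ is one of the operators in the triple \eqref{eq:triple}, so the $\slTwo$-action is not by vertex-algebra derivations. It is only an action on the underlying vector space.

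The only delicate point is the sign bookkeeping in Proposition~\ref{prop:endpoint}, namely the sign of the coefficient of $z_1z_2$ in $(z_1-z_2)^2$. This can be checked by hand: $(z_1-z_2)^2=z_1^2-2z_1z_2+z_2^2$, so the coefficient is $-2$. That confirms $P^2x^2=-2y^2$ independently of the general permutation argument. Everything else uses only the commutativity of the coordinate fields and the values of $P$ on the generators.
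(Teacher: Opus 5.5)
Your proof is correct, but it takes a different route from the paper's.

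\textbf{The paper's argument.} It tests the $0$-th product instead of the normally ordered one. Since $\gamma_{1(0)}\gamma_2=0$, a derivation would force
$0=(P\gamma_1)_{(0)}\gamma_2+\gamma_{1(0)}(P\gamma_2)=\beta_{2(0)}\gamma_2-\gamma_{1(0)}\beta_1=2$.
This uses only the generator values $P\gamma_1=\beta_2$ and $P\gamma_2=-\beta_1$ from \eqref{eq:generators}, plus one OPE coefficient. It shows that $P$ already fails to respect the simple-pole part of the OPE.

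\textbf{Your argument.} You stay inside the commutative coordinate subalgebra $\C[x,y]$ and test the Leibniz rule for the $(-1)$-product. Your key input is the heavier Proposition~\ref{prop:endpoint}. A derivation with $Px=y$ and $Py=0$ would give $P^nx^n=n!\,y^n$. So the sign $(-1)^{n(n-1)/2}$ in that proposition is exactly the obstruction, and $n=2$ is the smallest case where this sign equals $-1$.

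Relying on that sign is safe. The zero-mode factor $z_1^2$ and the oscillator factor $(1-z_2/z_1)^2$ combine into the symmetric polynomial $(z_1-z_2)^2$, so no ordering convention can flip the coefficient $-2$ of $z_1z_2$. The value $P\beta_2=0$ that you also need is immediate: the zero-mode factor is $z$, and the rest of the expression is a power series.

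\textbf{Trade-off.} The paper's check is more elementary and self-contained. Yours depends on the lattice-type computation, but in return it reads the sign in Proposition~\ref{prop:endpoint} conceptually, as the precise failure of $P$ to be a derivation.
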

\begin{proof}
Since $\gamma_{1(0)}\gamma_2=0$, the derivation identity would imply
\[
 0=(P\gamma_1)_{(0)}\gamma_2+
       \gamma_{1(0)}(P\gamma_2)
   =\beta_{2(0)}\gamma_2-
       \gamma_{1(0)}\beta_1=2,
\]
which is a contradiction.
\end{proof}

\subsection{Integrability}
\label{sec:integrability}

We choose a Virasoro vector 
\[
L:=\frac{1}2 (\beta_1 \gamma_1'- \beta_1'\gamma_1)+\frac12(\beta_2 \gamma_2'-\beta_2' \gamma_2).
\]
Hence we equip $\mathcal S$ with the symmetric conformal grading, whose
grading operator $L_0$ satisfies
\[
\operatorname{wt}(\partial^r\beta_i)
=
\operatorname{wt}(\partial^r\gamma_i)
=
r+\frac12
\qquad (r\geq 0,\ i=1,2).
\]
Then
\[
\mathcal S
=
\bigoplus_{d\in\frac12\mathbb Z_{\geq0}}\mathcal S[d],
\qquad
\dim\mathcal S[d]<\infty.
\]

\begin{lemma}
We have the identity on $\cS$
\begin{align}
\label{eq:integrability-L0-P}
[L_0,P]=&P(h_0-1),\\
\label{eq:integrability-L0-E}
[L_0,\mathsf E]=&-\mathsf E(h_0+1).
\end{align}
\end{lemma}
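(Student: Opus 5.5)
The plan is to compute $[L_0,F(z)]$ at the level of the lattice-type field $F(z)=\Dr_h(A;z)$ on the localization $\cS[\gamma_1^{-1}]$, take residues, and then transport the result to $\ee$ using the automorphism $S$.

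\emph{Step 1: conformal data of $h$ and $A$.} First I check that $h$ is primary of weight $1$ for the symmetric Virasoro vector $L$. For a single pair $\beta,\gamma$ of weight $\tfrac12$, the third-order pole of $L(z)\norm{\beta\gamma}(w)$ is proportional to $1-2\lambda$ with $\lambda=\tfrac12$, so it vanishes; a direct Wick computation confirms this. Hence
\[
 [L_0,h_m]=-m\,h_m ,\qquad [L_0,h_0]=0 .
\]
Next, $A=\norm{\gamma_1^{-1}\beta_2}$ is a product of primary fields of weights $-\tfrac12$ and $\tfrac12$ with no mutual contraction. This requires extending $L$ to $\cS[\gamma_1^{-1}]$; there $\gamma_1^{-1}$ is primary of weight $-\tfrac12$, since $L$ acts by a derivation-type formula on the coordinate field $\gamma_1$. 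So $A$ is primary of weight $0$, and therefore
\[
 [L_0,A(z)]=z\dd_zA(z).
\]
I expect this step to be the main point to verify carefully, because the localized primarity of $\gamma_1^{-1}$ and of $A$ is what makes the rest purely formal. I would check it through the OPE $L(z)\gamma_1^{-1}(w)$, obtained by differentiating $\gamma_1\gamma_1^{-1}=1$.

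\emph{Step 2: commutator with the dressed field.} Write $F(z)=E_-(z)A(z)z^{-h_0}E_+(z)$ as in \eqref{eq:dressing-intro}. Since $[L_0,h_m]z^{-m}=-m\,h_m z^{-m}=z\dd_z(h_m z^{-m})$, and all $h$-modes in a given exponential commute with each other, we get $[L_0,E_\pm(z)]=z\dd_zE_\pm(z)$. Moreover $L_0$ commutes with $z^{-h_0}$. On the other hand,
\[
 z\dd_z z^{-h_0}=-h_0z^{-h_0},
\]
and $h_0$ commutes with $E_+(z)$. Comparing $[L_0,F(z)]$ with $z\dd_zF(z)$ by the Leibniz rule therefore gives
\[
 [L_0,F(z)]=z\dd_zF(z)+F(z)\,h_0 .
\]

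\emph{Step 3: residues and the $S$-conjugate.} Since $\Res_z z\dd_zF(z)=-\Res_zF(z)$, taking residues yields $[L_0,P]=-P+Ph_0=P(h_0-1)$ on the localization. By Lemma~\ref{lem:P-preserves-S} this restricts to $\cS$, proving \eqref{eq:integrability-L0-P}.

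For $\ee=SPS$, I check directly from \eqref{eq:S} that $S(L)=L$. Indeed, the $(\beta_1,\gamma_1)$ term $\tfrac12(\beta_1\gamma_1'-\beta_1'\gamma_1)$ is mapped to $\tfrac12(\beta_2\gamma_2'-\beta_2'\gamma_2)$ using commutativity of normally ordered bosonic products, and vice versa. Since $S$ is a vertex algebra automorphism, this gives $SL_0S=L_0$, and also $Sh_0S=-h_0$. Conjugating \eqref{eq:integrability-L0-P} by $S$ then gives
\[
 [L_0,\ee]=S\,P(h_0-1)\,S=\ee(-h_0-1)=-\ee(h_0+1),
\]
which is \eqref{eq:integrability-L0-E}.
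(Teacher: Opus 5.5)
Your proposal is correct and follows essentially the same route as the paper. You compute $[L_0,F(z)]=z\dd_zF(z)+F(z)h_0$ factor by factor from $[L_0,h_m]=-m\,h_m$ and $[L_0,A(z)]=z\dd_zA(z)$, take residues, and conjugate by $S$ using $SL_0S=L_0$ and $Sh_0S=-h_0$; your extra checks of $S(L)=L$ and of the weight of $\gamma_1^{-1}$ on the localization fill in details the paper leaves implicit, though only the $L_0$-weights of $h$ and $A$ (not their primarity) are actually needed.
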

\begin{proof}
Recall that, on the localization where $\gamma_1$ is invertible,
\[
P=\operatorname{Res}_z F(z),
\]
where
\[
F(z)=
\exp\left(-\sum_{n>0}\frac{h_{-n}}{n}z^n\right)
A(z)z^{-h_0}
\exp\left(\sum_{n>0}\frac{h_n}{n}z^{-n}\right),
\qquad
A=:\gamma_1^{-1}\beta_2:.
\]
The field $A$ has conformal weight zero. Hence
\[
[L_0,A(z)]=z\partial_z A(z),
\qquad
[L_0,h_n]=-nh_n.
\]
Applying these identities to the factors of $F(z)$, while
preserving their order, gives
\[
[L_0,F(z)]
=
z\partial_zF(z)+F(z)h_0.
\]
Indeed, the additional term compensates for
\[
z\partial_z(z^{-h_0})=-h_0z^{-h_0},
\qquad
[L_0,z^{-h_0}]=0.
\]
Taking residues and using
\[
\operatorname{Res}_z z\partial_zF(z)
=
-\operatorname{Res}_z F(z)
\]
proves \eqref{eq:integrability-L0-P}.
Conjugating it by $S$, we also obtain \eqref{eq:integrability-L0-E}
due to Theorem \ref{thm:main} and
the following relations
 \[
SL_0S=L_0,\qquad Sh_0S=-h_0.
\]
\end{proof}

\begin{theorem}\label{thm:integrability}
The operators $\mathsf E$ and $\mathsf F$ act locally nilpotently
on $\mathcal S$. Moreover, $\mathcal S$ is a direct sum of
finite-dimensional $\mathfrak{sl}_2$-modules. Consequently, this
action integrates uniquely to a rational representation of
$SL_2(\mathbb C)$ on $\mathcal S$.
\end{theorem}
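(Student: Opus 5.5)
The strategy is to use the lemma just proved, \eqref{eq:integrability-L0-P}--\eqref{eq:integrability-L0-E}, to show that $\ff$ and $\ee$ move between finite-dimensional pieces of a suitable bigrading. Local nilpotence will then follow from a degree argument. Since $h_0$ acts semisimply with integer eigenvalues, we refine the conformal grading to
\[
\cS=\bigoplus_{d,s}\cS[d]_s,\qquad \cS[d]_s=\{v\in\cS[d]\mid h_0v=sv\},
\]
and each piece is finite dimensional.

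Take $v\in\cS[d]_s$. Since $[h_0,\ff]=-2\ff$, we have $\ff v\in\cS_{s-2}$. By \eqref{eq:integrability-L0-P},
\[
L_0\ff v=\ff L_0v+\ff(h_0-1)v=(d+s-1)\ff v,
\]
so $\ff$ maps $\cS[d]_s\to\cS[d+s-1]_{s-2}$. In the same way, \eqref{eq:integrability-L0-E} gives
\[
\ee:\cS[d]_s\to\cS[d-s-1]_{s+2}.
\]
Both maps preserve $d+\tfrac14 s^2$ up to a constant shift: indeed $d+s-1+\tfrac14(s-2)^2=d+\tfrac14s^2$, and similarly for $\ee$. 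Hence the operator $L_0+\tfrac14 h_0^2$ commutes with $\ee$, $\ff$, and $\hh$.

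Now iterate. Applying $\ee^k$ to $v\in\cS[d]_s$ gives $h_0$-charge $s+2k$ and conformal weight
\[
d-\sum_{i=0}^{k-1}(s+2i+1)=d-ks-k^2.
\]
This becomes negative for large $k$, and $\cS$ has no negative weights, so $\ee^kv=0$. For $\ff^k$ the weight is $d+ks-k^2$, which is again eventually negative, so $\ff^kv=0$. Thus both operators are locally nilpotent.

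For the decomposition, the cleanest route is through the commuting operator $C=L_0+\tfrac14 h_0^2$. Its eigenspaces $\cS^{(c)}$ are $\slTwo$-stable. Each $\cS^{(c)}$ is finite dimensional because $d=c-\tfrac14 s^2\ge 0$ forces $|s|\le 2\sqrt c$, which leaves only finitely many pairs $(d,s)$. The operator $C$ is diagonalizable since $L_0$ and $h_0$ commute and are both semisimple, so $\cS=\bigoplus_c\cS^{(c)}$ is a sum of finite-dimensional $\slTwo$-modules. By Weyl's theorem each summand is completely reducible, hence $\cS$ is a direct sum of finite-dimensional irreducibles.

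Integration follows by exponentiating on each finite-dimensional summand, which gives a rational $SL_2(\C)$-action. Uniqueness holds because $SL_2(\C)$ is connected and generated by the one-parameter subgroups $\exp(t\ee)$ and $\exp(t\ff)$, and local nilpotence makes these well defined on all of $\cS$.

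The main point needing care is checking that the grading argument uses only structure already established: $L_0$ and $h_0$ commute and are semisimple, and the identities \eqref{eq:integrability-L0-P}--\eqref{eq:integrability-L0-E} hold on $\cS$ itself rather than only on the localization. The lemma already states them on $\cS$, so no real obstacle is expected.
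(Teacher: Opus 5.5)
Your proposal is correct and takes essentially the same route as the paper. You use $[L_0,\ff]=\ff(h_0-1)$ and $[L_0,\ee]=-\ee(h_0+1)$ to get the weights $d+ks-k^2$ and $d-ks-k^2$ of $\ff^kv$ and $\ee^kv$, deduce local nilpotence from the nonnegativity of the grading, and then split $\cS$ into the finite-dimensional, $\slTwo$-stable eigenspaces of $L_0+\tfrac14h_0^2$. The only difference is that you spell out the complete-reducibility, exponentiation and uniqueness steps, which the paper leaves implicit.
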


\begin{proof}
We note that the operators $L_0$ and $h_0$ commute and are simultaneously
diagonalizable, and the eigenvalues of $h_0$ are integers.
Let $v\in\mathcal S$ be a simultaneous eigenvector with
\[
L_0v=dv,\qquad h_0v=mv.
\]
Using \eqref{eq:integrability-L0-P},
\eqref{eq:integrability-L0-E}, and the relations
\[
[h_0,\mathsf F]=-2\mathsf F,\qquad
[h_0,\mathsf E]=2\mathsf E,
\]
we obtain, by induction on $r\geq0$,
\begin{align}
h_0\mathsf F^rv
&=(m-2r)\mathsf F^rv,
&
L_0\mathsf F^rv
&=(d+rm-r^2)\mathsf F^rv,
\label{eq:integrability-F-degrees}\\
h_0\mathsf E^rv
&=(m+2r)\mathsf E^rv,
&
L_0\mathsf E^rv
&=(d-rm-r^2)\mathsf E^rv.
\label{eq:integrability-E-degrees}
\end{align}
Both $d+rm-r^2$ and $d-rm-r^2$ are negative for sufficiently
large $r$. Since $\mathcal S$ has no negative conformal degrees,
it follows that
\[
\mathsf F^rv=\mathsf E^rv=0
\qquad\text{for sufficiently large }r.
\]
Every vector of $\mathcal S$ is a finite sum of simultaneous
eigenvectors of $L_0$ and $h_0$. Thus $\mathsf E$ and
$\mathsf F$ are locally nilpotent on all of $\mathcal S$.
\end{proof}

To exhibit finite-dimensional invariant subspaces, define
\[
K=L_0+\frac14h_0^2.
\]
Equations \eqref{eq:integrability-F-degrees} and
\eqref{eq:integrability-E-degrees} imply
\[
[K,\mathsf E]=[K,\mathsf F]=[K,h_0]=0.
\]
Indeed, the eigenvalue $d+m^2/4$ is unchanged by either
$\mathsf E$ or $\mathsf F$.

The operator $K$ is diagonalizable, and every eigenspace
\[
\mathcal S^{K=\kappa}
=
\{v\in\mathcal S\mid Kv=\kappa v\}
\]
is an $\mathfrak{sl}_2$-submodule. If a simultaneous eigenvector
of $L_0$ and $h_0$ belongs to this eigenspace, then
\[
d+\frac{m^2}{4}=\kappa,
\]
so $0\leq d\leq\kappa$. Therefore
\[
\mathcal S^{K=\kappa}
\subseteq
\bigoplus_{\substack{d\in\frac12\mathbb Z_{\geq0}\\d\leq\kappa}}
\mathcal S[d],
\]
and the right-hand side is finite-dimensional. Consequently,
\[
\mathcal S
=
\bigoplus_{\kappa}\mathcal S^{K=\kappa}
\]
is a direct sum of finite-dimensional $\mathfrak{sl}_2$-modules.


\begin{proof}[Proof of (2) and (3) in  Theorem~\ref{thm:main}]
Parts (2) and (3) of Theorem 1.1 follow from Corollary 3.2 and Theorem 3.6, respectively.
\end{proof}

\subsection{Compatibility with Creutzig's decomposition}
\label{sec:creutzig-decomposition}

We explain how the $\slTwo$-action of
Theorem~\ref{thm:main} acts on the multiplicity spaces in
Creutzig's decomposition of $\cS$.
To distinguish the affine currents from the operators
$\ee,\ff,\hh$, write
\[
 E=\norm{\beta_2\gamma_1},\qquad
 F=\norm{\beta_1\gamma_2},\qquad
 H=\norm{\beta_1\gamma_1}-\norm{\beta_2\gamma_2}.
\]
These currents generate $L_{-1}(\mathfrak{sl}_2)$ and commute
with the Heisenberg vertex algebra $\pi$ generated by $h$.

For $n\geq0$, let $L_{-1}(n)$ denote the simple highest-weight
affine $\mathfrak{sl}_2$-module at level $-1$ whose top is the
irreducible $\mathfrak{sl}_2$-module of highest weight $n$.
For $s\in\mathbb Z$, let $\pi_s$ be the Heisenberg Fock module
generated by a vector $|s\rangle$ satisfying
\[
 h_0|s\rangle=s|s\rangle,\qquad
 h_r|s\rangle=0\quad(r>0).
\]
By \cite[Proposition~5.1]{C}, there is an isomorphism
\begin{equation}\label{eq:creutzig-branching}
 \cS
 \cong
 \bigoplus_{n\geq0}
 L_{-1}(n)\otimes M_n,
 \qquad
 M_n:=\bigoplus_{j=0}^{n}\pi_{n-2j},
\end{equation}
of $L_{-1}(\mathfrak{sl}_2)\otimes\pi$-modules.
The charges occurring in $M_n$ are precisely the weights of
the irreducible $\slTwo$-module of highest weight $n$.
The following commutation identities make the relation with
our construction explicit.

\begin{prop}\label{prop:commuting-right-action}
For every $m\in\mathbb Z$ and $x\in\{E,F,H\}$, we have
\[
 [x_m,\ee]=[x_m,\ff]=[x_m,\hh]=0.
\]
Moreover,
\[
 [h_r,\ee]=[h_r,\ff]=0\quad(r\neq0),
 \qquad
 [h_0,\ee]=2\ee,\qquad [h_0,\ff]=-2\ff.
\]
\end{prop}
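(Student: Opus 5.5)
The plan is to work in the common localization $\widetilde{\cS}$, where $\ff=\Res_zF(z)$ and $\ee=\Res_zG(z)$ are given by the lattice-type formulas \eqref{eq:F-lattice}--\eqref{eq:G-lattice}. Since $\cS\subset\widetilde{\cS}$ is injective and both sides preserve $\cS$, identities proved in $\widetilde{\cS}$ restrict to $\cS$. The statements about $h_r$ are nearly immediate. We have $[h_0,\ff]=-2\ff$ and $[h_0,\ee]=2\ee$ from part (1) of Theorem~\ref{thm:main}, and $[x_m,\hh]=0$ because the affine currents commute with $\pi$. For $r\neq0$, write $h=\ell-\alpha$. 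The $\ell$-modes commute with $U$, with each other, and with all $\alpha$-modes except through the central term. More directly, Lemma~\ref{lem:U} gives $[h_r,U]=0$ for $r\ne0$, and conjugating the dressing exponentials produces the factor $\mp2z^r$, which cancels the factor coming from $[h_r,A(z)]=-2z^rA(z)$. So in the defining formula $F(z)=\Dr_h(A;z)$, the commutator $[h_r,F(z)]$ is $-2z^rF(z)$ from $A$, plus $+2z^rF(z)$ from the exponential $\exp(\sum_{m<0}h_mz^{-m}/m)$ when $r>0$ (respectively from the positive-mode exponential when $r<0$, using $[h_r,h_s]=-2r\delta_{r+s,0}$). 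These cancel, giving $[h_r,F(z)]=0$ and hence $[h_r,\ff]=0$. Conjugating by $S$, and using $S(h)=-h$, gives $[h_r,\ee]=0$.

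For the affine currents, I would first reduce to $\ff$. The automorphism $S$ permutes $\{E,F,H\}$ up to sign: $S(E)=\norm{\gamma_1\beta_2}$ swapped appropriately, so that $S$ maps $E\mapsto E$, $F\mapsto F$, $H\mapsto H$, or a signed permutation of them; this is to be checked from \eqref{eq:S}. Since $S$ is a vertex algebra automorphism, $Sx_mS=(Sx)_m$, so $[x_m,\ff]=0$ for all $x$ implies $[x_m,\ee]=0$. Moreover, $\mathfrak{sl}_2[t,t^{-1}]$ is generated by $E_m$ and $F_m$ (plus $H_m=[E_m,F_0]$, up to central terms), so it suffices to show $[E_m,\ff]=[F_m,\ff]=0$.

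In the coordinates \eqref{eq:polarization}, $E=\norm{\beta_2\gamma_1}=\norm{yx}$ is a coordinate field. It commutes with $x^{-1}$, $y$, and hence with $A$. It also commutes with $h$, so it commutes with every factor of $F(z)$, and $[E_m,\ff]=0$ follows at once. The field $F=\norm{\beta_1\gamma_2}=-\norm{p_xp_y}$ is the harder case. Since $F$ commutes with $h$, only $[F_m,A(z)]$ matters. I would compute the OPE $F(z)A(w)$ by Wick contractions. The single contractions give $-\frac{1}{z-w}\bigl(-\norm{x^{-2}y\,p_y}\cdot\text{(sign)}+\norm{x^{-1}p_x}\bigr)$, and the double contraction gives a term $\frac{-x^{-2}}{(z-w)^2}$-type. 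I expect the single-pole term to be a total derivative-type expression of the form $\norm{hB}-\dd B$ for $B=x^{-1}\cdot(\text{something of weight }\!-\!1)$, namely $B=x^{-2}$ or $x^{-1}p_y^{0}$. Then, by the same mechanism as in Lemma~\ref{lem:P-preserves-S} ($\dd_z\Dr_h(B;z)=\Dr_h(\dd B-\norm{hB};z)$), the commutator $[F_m,\ff]$ becomes the residue of a total derivative and vanishes. Concretely, $[F_m,F(z)]=\Dr_h\bigl(\sum_k\binom mk z^{m-k}(F_{(k)}A);z\bigr)$, since $F$ commutes with the $h$-dressing. So it suffices that
\[
 z^m F_{(0)}A + m z^{m-1}F_{(1)}A
\]
be a total $z$-derivative after dressing, i.e.\ $F_{(0)}A=\pm(\dd B-\norm{hB})$ and $F_{(1)}A=\pm B$ with matching signs. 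This is the main obstacle, and I would verify it by a direct Wick computation in $\cS_1=\cS[\gamma_1^{-1}]$.

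If the signs do not match, a fallback is available. Both $\ff$ and the $\widehat{\mathfrak{sl}}_2$-action preserve the $K$-eigenspaces, which are finite-dimensional. The commutator $[F_m,\ff]$ is then an operator commuting with $\pi$-oscillators and intertwining $\mathfrak{sl}_2$-lowest-weight data. One could check it on generators of $\cS$ as an $\widehat{\mathfrak{sl}}_2\otimes\pi$-module, such as the vectors $\gamma_1^n$, using Corollary~\ref{cor:strings}. The direct OPE identity is the route I would try first.
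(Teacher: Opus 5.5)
Your plan follows the paper's proof essentially step for step. For $h_r$ with $r\neq0$, the factor $-2z^r$ from $A(z)$ cancels against the dressing exponentials. Conjugation by $S$, which in fact fixes $E$, $F$, $H$ and sends $h_r\mapsto -h_r$, passes from $\ff$ to $\ee$. The coordinate field $E=\norm{yx}$ commutes trivially with every factor of $\Dr_h(A;z)$, and for the affine current $F$ the identity $[F_m,\Dr_h(A;z)]=\dd_z\bigl(z^m\Dr_h(B;z)\bigr)$ does the work. The only cosmetic difference is that you obtain $H_m$ as $[E_m,F_0]$ instead of from $H(z)A(w)\sim0$.

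The Wick computation you left open closes with matching signs: $F(z)A(w)\sim\frac{B(w)}{(z-w)^2}+\frac{\dd B(w)-\norm{hB}(w)}{z-w}$ with $B=\gamma_1^{-2}=x^{-2}$. So the double-pole coefficient is $+x^{-2}$, your single-contraction expression holds with sign $+1$, and the fallback is not needed.
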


\begin{proof}
Recall that $\ff=P=\Res_z\Dr_h(A;z)$, where
$A=\norm{\gamma_1^{-1}\beta_2}$, and set $B=\gamma_1^{-2}$.
A direct Wick calculation gives
\begin{align}\label{eq:affine-A-brackets}
 E(z) A(w)&\sim 0, \qquad H(z)A(w)\sim 0,\\
 F(z) A(w)&\sim \frac{B(w)}{(z-w)^2}+ \frac{\dd B(w)-\norm{hB}(w)}{z-w}. 
\end{align}
Since the affine currents commute with all modes of $h$,
they commute with the exponential factors in the lattice-type
operator. Differentiating \eqref{eq:dressing-intro}, we obtain
\begin{equation}\label{eq:dressing-derivative}
 \dd_z\Dr_h(B;z)
 =
 \Dr_h\bigl(\dd B-\norm{hB};z\bigr).
\end{equation}
Consequently, for every $m\in\mathbb Z$,
\begin{align*}
 [F_m,\Dr_h(A;z)]
 &=
 m z^{m-1}\Dr_h(B;z)
 +z^m\Dr_h\bigl(\dd B-\norm{hB};z\bigr)\\
 &=
 \dd_z\bigl(z^m\Dr_h(B;z)\bigr).
\end{align*}
Taking residues gives $[F_m,P]=0$. The corresponding
commutators with $E_m$ and $H_m$ vanish directly by
\eqref{eq:affine-A-brackets}.

Next, the identities
\[
 [h_\lambda A]=-2A,\qquad
 [h_r,h_s]=-2r\delta_{r+s,0}
\]
imply
\[
 [h_r,\Dr_h(A;z)]=0\quad(r\neq0),\qquad
 [h_0,\Dr_h(A;z)]=-2\Dr_h(A;z).
\]
Indeed, for $r\neq0$, the contribution from the lattice-type
exponentials cancels the contribution from $A(z)$.
Taking residues proves the assertions for $\ff$.

Finally, $S$ fixes $E,F,H$ and satisfies $Sh_rS=-h_r$.
The assertions for $\ee=SPS$ follow by conjugation.
Those for $\hh=h_0$ follow from the commutativity of the
affine and Heisenberg subalgebras.
\end{proof}

Let $\mathfrak h_{\mathrm{osc}}$ denote the Heisenberg algebra
generated by the nonzero modes $h_r$ and the central element,
with the central element acting so that
$[h_r,h_s]=-2r\delta_{r+s,0}$.
All Fock modules $\pi_s$ restrict to the same
$\mathfrak h_{\mathrm{osc}}$-module
\[
 \mathcal F_{\mathrm{osc}}
 :=
 \mathbb C[h_{-1},h_{-2},\ldots].
\]
Here the negative modes act by multiplication, while
\[
 h_r=-2r\frac{\partial}{\partial h_{-r}}
 \qquad(r>0).
\]

\begin{proposition}\label{prop:creutzig-sl2}
Under \eqref{eq:creutzig-branching}, the new $\slTwo$-action
acts on each multiplicity space $M_n$. There is an isomorphism
\[
 M_n\cong V_n\otimes\mathcal F_{\mathrm{osc}}
\]
of $\slTwo\oplus\mathfrak h_{\mathrm{osc}}$-modules, where
$V_n$ is the irreducible module in
Theorem~\ref{thm:main}. Consequently,
\begin{equation}\label{eq:joint-decomposition}
 \cS
 \cong
 \bigoplus_{n\geq0}
 L_{-1}(n)\otimes V_n\otimes\mathcal F_{\mathrm{osc}}
\end{equation}
as a module for the mutually commuting affine
$\mathfrak{sl}_2$-action, new $\slTwo$-action, and
$\mathfrak h_{\mathrm{osc}}$-action.
Under this identification, $h_0$ acts as $\hh$ on $V_n$.
\end{proposition}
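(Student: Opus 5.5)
By Proposition~\ref{prop:commuting-right-action}, the operators $\ee,\ff,\hh$ commute with every mode of the affine currents $E,F,H$. Hence they preserve the space of affine-$\mathfrak{sl}_2$ intertwiners.

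The plan is to identify each multiplicity space intrinsically:
\[
 M_n\cong \operatorname{Hom}_{\widehat{\mathfrak{sl}}_2}\bigl(L_{-1}(n),\cS\bigr).
\]
Equivalently, $M_n$ is the space of vectors that are highest weight for the affine currents with weight $n$: annihilated by $E_0$ and by $x_m$ for $m>0$, with $H_0$-eigenvalue $n$. This identification requires that the decomposition \eqref{eq:creutzig-branching} be an honest direct sum of simple modules $L_{-1}(n)$ with pairwise distinct $n$. Since these modules are pairwise non-isomorphic and each is generated by its highest-weight space, $M_n$ is the joint kernel just described.

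Because $\ee,\ff,\hh$ commute with all $x_m$, they preserve this joint kernel. They therefore act on $M_n$, and the isomorphism \eqref{eq:creutzig-branching} becomes equivariant for $\slTwo$ acting on the second factor. The same argument shows that $\mathfrak h_{\mathrm{osc}}$ acts on $M_n$ and commutes with $\ee,\ff$, again by Proposition~\ref{prop:commuting-right-action}. Thus $M_n$ is a module for $\slTwo\oplus\mathfrak h_{\mathrm{osc}}$.

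Next I decompose $M_n$ as an $\slTwo\oplus\mathfrak h_{\mathrm{osc}}$-module. As an $\mathfrak h_{\mathrm{osc}}$-module, each $\pi_s$ is isomorphic to $\mathcal F_{\mathrm{osc}}$, generated by its vacuum. So
\[
 M_n\cong W_n\otimes\mathcal F_{\mathrm{osc}},
 \qquad
 W_n:=\{v\in M_n\mid h_rv=0\ (r>0)\}.
\]
This is the usual Stone--von Neumann type statement: a Heisenberg module on which positive modes act locally nilpotently and which is a sum of Fock modules is the tensor product of its vacuum space with $\mathcal F_{\mathrm{osc}}$.

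The space $W_n$ has dimension $n+1$, with one vector $|n-2j\rangle$ in each $h_0$-charge $n-2j$. Because $\ee,\ff,\hh$ commute with $h_r$ for $r\ne0$, they preserve $W_n$. The tensor decomposition is then $\slTwo\oplus\mathfrak h_{\mathrm{osc}}$-equivariant, provided $\slTwo$ acts on the first factor only. This holds because $\ee,\ff$ commute with the operators $h_{-r}$ that generate $\mathcal F_{\mathrm{osc}}$ from $W_n$.

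It remains to show $W_n\cong V_n$ as $\slTwo$-modules. By Theorem~\ref{thm:integrability}, $W_n$ is a finite-dimensional $\slTwo$-module, with $\hh=h_0$ acting with weights $n,n-2,\dots,-n$, each of multiplicity one. A finite-dimensional $\slTwo$-module with this character is forced to be irreducible of highest weight $n$, since any nontrivial decomposition would repeat the weight $0$ or $1$. Hence $W_n\cong V_n$.

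To match the explicit model $V_n$ of Theorem~\ref{thm:main}, I would check that $\gamma_1^n$ spans the top of $L_{-1}(n)\otimes|n\rangle$. It is killed by $E_0$ and by the positive modes, and has $H_0$-weight $n$ and $h_0$-weight $n$; so it lies in the highest-weight space and in $W_n$. Moreover, Corollary~\ref{cor:strings} shows that $P^j\gamma_1^n$ then spans the corresponding weight lines.

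Assembling these pieces gives \eqref{eq:joint-decomposition}, with $h_0=\hh$ acting on $V_n$.

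The main obstacle is the first step: justifying that the joint kernel description of $M_n$ is correct. This needs the decomposition \eqref{eq:creutzig-branching} to be completely reducible, with the $L_{-1}(n)$ pairwise non-isomorphic. If that is not clean, I would instead work inside the $K$-eigenspaces, which are finite-dimensional in each $(H_0,h_0)$-weight, and argue on graded characters. Alternatively, I could use the fact that the space of highest-weight vectors for $\widehat{\mathfrak{sl}}_2\oplus\mathfrak h_{\mathrm{osc}}$ in $\cS$ is stable under $\slTwo$, and then count dimensions.
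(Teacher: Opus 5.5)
Your proposal is correct. Its skeleton matches the paper's: Proposition~\ref{prop:commuting-right-action} lets $\ee,\ff,\hh$ act on $M_n$, and the Heisenberg vacuum space of $M_n$ has one line in each charge $n-2j$. Tensoring with $\mathcal F_{\mathrm{osc}}$ is then equivariant because $\ee,\ff$ commute with the $h_{-r}$.

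The genuine difference is in how that vacuum space $W_n$ is identified with $V_n$. The paper does it explicitly. It takes $v_j=P^j\gamma_1^n$ and uses Theorem~\ref{thm:main}(2), that is, Corollary~\ref{cor:strings}, resting on the Vandermonde computation of Proposition~\ref{prop:endpoint}, to know these vectors are nonzero. Since they are affine and Heisenberg highest-weight vectors of charges $n-2j$, they span the $n+1$ highest-weight lines, and the paper writes down the map $v_j\otimes p\mapsto p\,v_j$.

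You argue structurally instead. $W_n$ is an $(n+1)$-dimensional $\slTwo$-module whose weights $n,n-2,\dots,-n$ each occur once, so complete reducibility forces $W_n\cong V_n$.

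\emph{What each route buys.} Your route never needs the nonvanishing of $P^n\gamma_1^n$. Combined with your observation that $\gamma_1^n$ spans the top-charge line of $W_n$, it even reproves, given Creutzig's branching, that $P^j\gamma_1^n\neq0$ for $0\le j\le n$. The paper's route avoids Weyl's theorem. It also identifies the abstract multiplicity space directly with the concrete subspace $\operatorname{span}\{P^j\gamma_1^n\}\subseteq\cS$, with an explicit basis.

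\emph{Two small remarks.} First, finite-dimensionality of $W_n$ is immediate from $\dim W_n=n+1$, so you do not need Theorem~\ref{thm:integrability} there. Second, your ``main obstacle'' is not one. The decomposition \eqref{eq:creutzig-branching} is given as an honest direct sum, and the $L_{-1}(n)$ are pairwise non-isomorphic because their top $H_0$-weights differ. So your joint-kernel description of $M_n$ holds as stated, and no fallback is needed. That description in fact makes precise what the paper leaves implicit when it says the action ``acts on its multiplicity spaces''.
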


\begin{proof}
By Proposition~\ref{prop:commuting-right-action}, the new $\slTwo$-action
commutes with the affine algebra and hence acts on its
multiplicity spaces $M_n$.

The vector $\gamma_1^n$ satisfies
\[
 E_0\gamma_1^n=0,\qquad
 H_0\gamma_1^n=n\gamma_1^n,\qquad
 x_r\gamma_1^n=0
 \quad(x\in\{E,F,H\},\ r>0),
\]
and
\[
 h_0\gamma_1^n=n\gamma_1^n,\qquad
 h_r\gamma_1^n=0\quad(r>0).
\]
Thus it determines the Heisenberg highest-weight line of
charge $n$ in $M_n$.

Set
\[
 v_j:=P^j(\gamma_1^n),\qquad 0\leq j\leq n.
\]
Proposition \ref{prop:commuting-right-action} implies that each $v_j$
is an affine highest-weight vector of highest weight $n$
and a Heisenberg highest-weight vector of charge $n-2j$.
By Theorem~\ref{thm:main}, these vectors are nonzero and satisfy
\begin{equation}\label{eq:multiplicity-sl2}
 \hh v_j=(n-2j)v_j,\qquad
 \ff v_j=v_{j+1},\qquad
 \ee v_j=j(n-j+1)v_{j-1},
\end{equation}
where $v_{-1}=v_{n+1}=0$.

Each Fock summand in $M_n$ has a one-dimensional
Heisenberg highest-weight space. Hence the vectors
$v_0,\ldots,v_n$, viewed in the affine multiplicity space,
span all these highest-weight lines.
Since $\ee$ and $\ff$ commute with the negative Heisenberg
modes, the map
\[
 V_n\otimes\mathcal F_{\mathrm{osc}}\longrightarrow M_n,
 \qquad
 v_j\otimes p\longmapsto p\,v_j
\]
is an isomorphism of
$\slTwo\oplus\mathfrak h_{\mathrm{osc}}$-modules.
Combining these isomorphisms with
\eqref{eq:creutzig-branching} proves
\eqref{eq:joint-decomposition}.
\end{proof}

Thus the decomposition into Heisenberg Fock modules is
recovered from the weight decomposition of $V_n$:
\[
 V_n\otimes\mathcal F_{\mathrm{osc}}
 =
 \bigoplus_{j=0}^{n}
 \mathbb C v_j\otimes\mathcal F_{\mathrm{osc}}
 \cong
 \bigoplus_{j=0}^{n}\pi_{n-2j}.
\]
In particular, $\ff$ and $\ee$ connect adjacent Fock summands
while preserving their oscillator structure. They are not
homomorphisms for the full Heisenberg algebra, since they
change the eigenvalue of $h_0$.

\begin{remark}\label{rem:conformal-grading}
The new $\slTwo$-action does not preserve the conformal grading
of \cite{C}, since
\[
 \operatorname{wt}(v_j)
 =\frac{n(n+2)-(n-2j)^2}{4}
 =\frac n2+j(n-j).
\]
However, $K=L_0+\frac14h_0^2$ commutes with this action and
acts on $V_n$ by the scalar $\frac{n(n+2)}4$; see also the
discussion following the proof of
Theorem~\ref{thm:integrability}.
\end{remark}

\end{document}